\NeedsTeXFormat{LaTeX2e}
[1994/12/01]
\documentclass{ijmart-mod}
\chardef\bslash=`\\ 





\hfuzz1pc 

\usepackage{graphicx}
\usepackage[breaklinks=true]{hyperref}
\usepackage{mathtools}


\newtheorem{thm}{Theorem}[section]
\newtheorem{cor}[thm]{Corollary}
\newtheorem{lem}[thm]{Lemma}
\newtheorem{prop}[thm]{Proposition}

\theoremstyle{definition}

\newtheorem{rem}[thm]{Remark}

\theoremstyle{remark}



\newcommand{\eval}[2][\right]{\relax
  \ifx#1\right\relax \left.\fi#2#1\rvert}




\begin{document}
\title{Polytopal balls arising in optimization}

\author[A. Deza]{Antoine Deza}
\address{McMaster University, Hamilton, Ontario, Canada}
\email{deza@mcmaster.ca} 

\author[J.-B. Hiriart-Urruty]{Jean-Baptiste Hiriart-Urruty}
\address{Universit{\'e} Paul Sabatier, Toulouse, France}
\email{jean-baptiste.hiriart-urruty@math.univ-toulouse.fr}

\author[L. Pournin]{Lionel Pournin}
\address{Universit{\'e} Paris 13, Villetaneuse, France}
\email{lionel.pournin@univ-paris13.fr}

\begin{abstract}
We study a family of polytopes and their duals, that appear in various optimization problems as the unit balls for certain norms. These two families interpolate between the hypercube, the unit ball for the $\infty$-norm, and its dual cross-polytope, the unit ball for the $1$-norm. We give combinatorial and geometric properties of both families of polytopes such as their $f$-vector, their volume, and the volume of their boundary.
\end{abstract}
\maketitle

\section{Introduction}\label{HCP.sec.0}

A family of norms on $\mathbb{R}^d$ resembling the usual Euclidean norm, yet polytopal in the sense that the balls for these norms are polytopes, were introduced in \cite{Watson1992} as a tool to solve linear approximation problems. These norms, defined by
\begin{equation}\label{HCP.sec.0.eq.-1}
\|x\|_{(k)}=\inf\{\|u\|_1+k\|v\|_\infty:u+v=x\}\mbox{,}
\end{equation}
where $x$ is a vector from $\mathbb{R}^d$ and $k$ is a parameter that belongs to the interval $[1,d]$, were later considered in the context of robust optimization \cite{BertsimasPachamanovaSim2004}, a method to deal with linear optimization under uncertain constraints. As shown in \cite{BertsimasPachamanovaSim2004}, defining the uncertainty constraints using these norms, referred to as $D$-norms in this case, allows for an efficient way to solve robust optimization problems. It is further observed in \cite{GotohUryasev2016,PavlikovUryasev2014} that these norms are naturally connected with the \emph{conditional value at risk}, a popular metric used in quantitative finance: just as the conditional value at risk, these norms, called the CVaR norms in this other context, put the emphasis on the largest coordinates of a vector from $\mathbb{R}^d$. In particular, it is shown in \cite{PavlikovUryasev2014} that these norms are a solution to an optimization problem regarding the conditional value at risk. The same norms also appear in optimization problems over sets of matrices \cite{WuDingSunToh2014}, where they are called vector $k$-norms, and in sparse optimization \cite{GaudiosoGorgoneHiriart-Urruty2020,GotohTakedaTono2018}. In the latter case, one is faced with the \emph{sparsity} constraint on the solutions to a problem: the desired solutions---vectors from $\mathbb{R}^d$---are required to have a prescribed number of non-zero coordinates. This happens for instance in data science, in machine learning \cite{PokuttaSpiegelZimmer2020}, in mathematical imaging, or in statistics among other fields. The number of non-zero coordinates of a vector $x$ of $\mathbb{R}^d$ is often denoted by $\|x\|_0$ in the optimization literature. Formally,
$$
\|x\|_0=\left|\{i:x_i\neq0\}\right|\mbox{,}
$$
where $x_1$ to $x_d$ denote the coordinates of $x$.
\begin{figure}
\begin{center}
\includegraphics{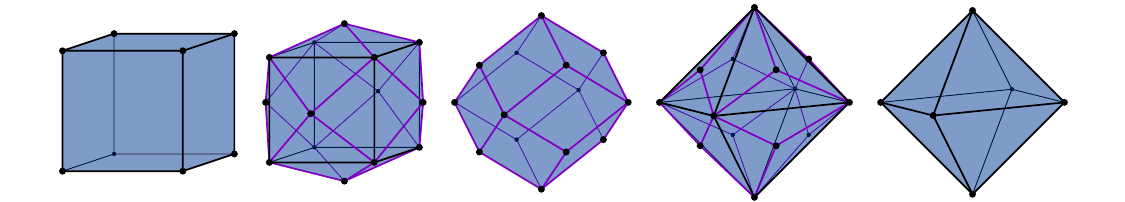}
\caption{The polytope $\rho_{3,k}$, when $k$ is equal to $1$, $3/2$, $2$, $5/2$, and $3$ (from left to right).}
\label{Fig.DPR.1.Z}
\end{center}
\end{figure}
Despite what the notation suggests, this quantity does not define a norm since it is not absolutely homogeneous. 
In fact, the map $x\mapsto\|x\|_0$ is not convex, or even continuous and as a result, it is often replaced by $x\mapsto\|x\|_1$ in order to make sparse optimization problems computationally tractable \cite{JuditskyNemirovski2020}. Another approach is to replace $\|x\|_0$ by the difference of two of the norms introduced above. Indeed, it is observed in \cite{GaudiosoGorgoneHiriart-Urruty2020,GotohTakedaTono2018} that the sparsity constraint $\|x\|_0\leq{k}$ is equivalent to the equality
$$
\|x\|_{(k)}-\|x\|_{(l)}=0
$$
for any $l$ such that $k<l\leq{d}$. Hence, the norms defined by (\ref{HCP.sec.0.eq.-1}) allow for a computationally effective way to estimate sparsity.

As we mentioned above, the balls for these norms are polytopes. 
The purpose of this article is to study the combinatorics of these polytopes and their duals, by which we mean their $f$-vector, as well as some of their geometric properties such as their volume and the volume of their boundary. 



Throughout the article, we denote by $\gamma_d$ the $d$-dimensional hypercube $[-1,1]^d$ and by $\beta_d$ the cross-polytope whose vertices are the center of the facets of $\gamma_d$, following the notation used by Coxeter \cite{Coxeter1973}. Note that the former is the unit ball for the $\infty$-norm and the latter the unit ball for the $1$-norm. 

We consider the family of polytopes 
\begin{equation}\label{HCP.sec.0.eq.0}
\rho_{d,k}=\mathrm{conv}\!\left(\beta_d\cup\frac{1}{k}\gamma_d\right)
\end{equation}
when $k$ ranges from $1$ to $d$. It is shown in \cite{GaudiosoGorgoneHiriart-Urruty2020} that $\rho_{d,k}$ is the unit ball for the norm defined by (\ref{HCP.sec.0.eq.-1}). Note that these polytopes interpolate between the hypercube and its dual cross-polytope (for another example of polytopes with that property, in a loose sense, see \cite{LeeLeungMargot2004}). In particular, $\rho_{d,1}$ coincides with the hypercube $\gamma_d$ because $\beta_d$ is a subset of that hypercube. Similarly, $\rho_{d,d}$ is equal to the cross-polytope $\beta_d$ as this cross-polytope admits the dilated hypercube $\gamma_d/d$ as a subset. More precisely, the vertices of $\gamma_d/d$ are exactly the center of the facets of $\beta_d$, just as the vertices of $\beta_d$ are the centers of the facets of $\gamma_d$. As we shall see, this observation can be generalized, allowing to determine the whole face lattice of $\rho_{d,k}$. Recall that the norm defined by (\ref{HCP.sec.0.eq.-1}) can also be viewed as the support function of the polytope $\rho_{d,k}^\star$ polar to $\rho_{d,k}$ \cite{Hiriart-UrrutyLemarechal2001}. Since $\gamma_d$ is the polar of $\beta_d$, the polytopes $\rho_{d,k}^\star$ provide another way to continuously deform $\beta_d$ into $\gamma_d$, and give rise to alternate (dual) norms.
\begin{figure}
\begin{center}
\includegraphics{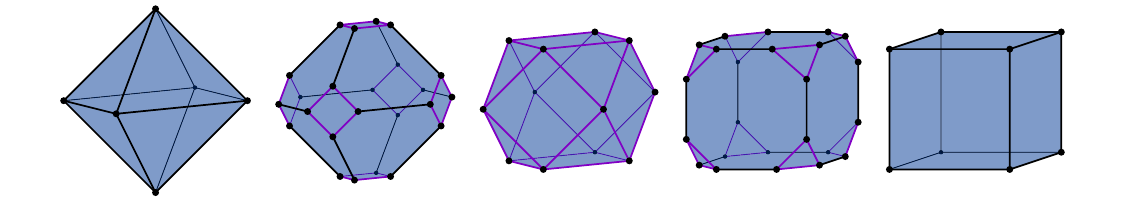}
\caption{The polytope $\rho_{3,k}^\star$, when $k$ is equal to $1$, $3/2$, $2$, $5/2$, and $3$ (from left to right).}
\label{Fig.DPR.1.Z}
\end{center}
\end{figure}
These polytopes have been considered in \cite{BlackDeLoera2021} in the context of monotone path computation within certain polytopal graphs. It follows from (\ref{HCP.sec.0.eq.0}) that
\begin{equation}\label{HCP.sec.0.eq.1}
\rho_{d,k}^\star=(k\beta_d)\cap\gamma_d\mbox{.}
\end{equation}

By duality, the $f$-vector of $\rho_{d,k}^\star$ is obtained by reversing that of $\rho_{d,k}$. However, the volume of $\rho_{d,k}^\star$ requires a separate computation, which we will also provide here. While we are mainly interested in the polytopes $\rho_{d,k}$ and $\rho_{d,k}^\star$ when $k$ is an integer, most of our results hold for any $k$ within the interval $[1,d]$. The combinatorics of $\rho_{d,k}$ and $\rho_{d,k}^\star$ is studied in Section \ref{HCP.sec.1}. The volume of $\rho_{d,k}$ and that of its boundary are computed in Section \ref{HCP.sec.2}. 
The same two volumes, but in the case of $\rho_{d,k}^\star$ rather than $\rho_{d,k}$ are computed in Section \ref{HCP.sec.3}.

\section{The combinatorics of $\rho_{d,k}$ and $\rho_{d,k}^\star$}\label{HCP.sec.1}

As we mentioned earlier, the number of the $(d-i-1)$-dimensional faces of $\rho_{d,k}^\star$ is equal to the number of the $i$-dimensional faces of $\rho_{d,k}$. Therefore, we only need to compute the number of the faces of one of them. In order to do that, we will give a close look at the continuous deformation of $\gamma_d$ into $\beta_d$ via the polytopes given by (\ref{HCP.sec.0.eq.0}). It will be convenient to consider the dilate $k\rho_{d,k}$ instead of $\rho_{d,k}$ itself. Recall, in particular that a polytope has the same combinatorics as any of its dilates by a non-zero coefficient.

First observe that $k\rho_{d,k}$ is obtained by pulling the centers of each facet of the hypercube $\gamma_d$ away from the hypercube along the axes of coordinates, until they are at a distance $k-1$ from the hypercube, and by taking the convex hull of these pulled points together with the vertices of $\gamma_d$. In particular, when $k$ is greater than $1$ but close enough to $1$, $k\rho_{d,k}$ is obtained from the hypercube $\gamma_d$ by glueing pyramids over each of its facets. It immediately follows that, except for its facets, all the proper faces of $\gamma_d$, are still faces of $k\rho_{d,k}$. Moreover, all the facets of $k\rho_{d,k}$ are pyramids over a $(d-2)$-face of $\gamma_d$. More precisely, if $\gamma_{d-2}$ is a $(d-2)$-dimensional face of $\gamma_d$, the two facets of $k\rho_{d,k}$ incident to it are the two pyramids over $\gamma_{d-2}$ whose apices are the points pulled from the center of the facets of $\gamma_d$ incident to $\gamma_{d-2}$. This describes the boundary complex of $k\rho_{d,k}$ whenever $1<k<2$. when $k=2$ the two facets of $k\rho_{d,k}$ incident to $\gamma_{d-2}$ merge into a single facet, a bipyramid over $\gamma_{d-2}$ whose two apices are the points pulled from the center of the facets of $\gamma_d$ incident to $\gamma_{d-2}$. By our description, all the facets of $2\rho_{d,2}$ are built this way. In particular, they are pairwise isometric.

Now let us describe how the boundary complex of $k\rho_{d,k}$ get modified when $2<k<3$. In this case, the two apices of the bipyramid over $\gamma_{d-2}$ are further pulled away from the hypercube, which splits that bipyramid into the convex hulls of the line segment that joins these two apices with each of the facets of $\gamma_{d-2}$. Again, all the facets of $k\rho_{d,k}$ are isometric to such a convex hull. Now recall that the facets of $\gamma_{d-2}$ are $(d-3)$-dimensional faces of $\gamma_d$. In particular these faces are $(d-3)$-dimensional hypercubes. Let $\gamma_{d-3}$ be one of these hypercubes. Observe that $\gamma_{d-3}$ is contained in exactly three facets of $k\rho_{d,k}$ because it is incident to exactly three $(d-2)$-dimensional faces of $\gamma_d$. Upon reaching $k=3$, these three facets merge into a single facet of $3\rho_{d,3}$, obtained as the convex hull of $\gamma_{d-3}$ and the equilateral triangle whose vertices are the points pulled from the centers of the three facets of $\gamma_d$ incident to $\gamma_{d-3}$. As above, all the facets of $3\rho_{d,3}$ are obtained this way.

That process repeats when $k$ belongs to an interval between two consecutive integers. In particular, when $k$ is an integer, each of the $(d-k)$-dimensional faces of the hypercube gives rise to a facet of $k\rho_{d,k}$ and all the facets of $k\rho_{d,k}$ are obtained this way. More precisely, we obtain the following.

\begin{thm}\label{HCP.sec.1.thm.1}
If $k$ is an integer, then the facets of $k\rho_{d,k}$ are exactly the convex hulls of the union of a $(d-k)$-dimensional face $\gamma_{d-k}$ of $\gamma_d$ with the $(k-1)$-dimensional regular simplex whose vertices are the points pulled from the centers of the $k$ facets of $\gamma_d$ incident to $\gamma_{d-k}$.
\end{thm}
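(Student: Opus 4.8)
The plan is to work with the dilate $P := k\rho_{d,k}$ and pass to its polar, where facets become vertices and are easy to list. Since $P$ contains the origin in its interior, its facets are exactly the sets $F_v = \{x \in P : \langle x, v\rangle = 1\}$ as $v$ ranges over the vertices of $P^\star$, and $v \mapsto F_v$ is a bijection onto the facets of $P$. From $(\ref{HCP.sec.0.eq.1})$ one has $P^\star = \frac1k\rho_{d,k}^\star = \beta_d \cap \frac1k\gamma_d$, so $P^\star = Q$ where $Q := \{y \in \mathbb{R}^d : \|y\|_1 \le 1 \text{ and } \|y\|_\infty \le 1/k\}$. Thus the whole problem reduces to determining the vertices of $Q$.

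For that I would use the criterion that $y$ is a vertex of $Q$ if and only if $y$ is the unique solution of the equalities obtained from the inequalities defining $Q$ (the $2d$ inequalities $\pm y_i \le 1/k$ and the $2^d$ inequalities $\langle\sigma, y\rangle \le 1$, $\sigma \in \{-1,1\}^d$) that are active at $y$. Writing $p$ for the number of nonzero coordinates of $y$ and $a$ for the number of coordinates equal to $\pm 1/k$, a short computation of the rank of the active normals shows that a vertex must satisfy $\|y\|_1 = 1$ and either $a = p$ or $a = p-1$. If $a = p$, then $\|y\|_1 = 1$ forces $p = k$, so $y$ has exactly $k$ coordinates equal to $\pm 1/k$ and the rest equal to $0$. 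If $a = p-1$, the one remaining nonzero coordinate has absolute value $1 - (p-1)/k$, and requiring this to lie strictly between $0$ and $1/k$ forces $k < p < k+1$, which is impossible because $k$ is an integer. Hence the vertices of $Q$ are exactly the $\binom{d}{k}2^k$ vectors with $k$ coordinates equal to $\pm 1/k$ and the others equal to $0$. I expect this rank/case analysis to be the main technical point, and it is the single place where integrality of $k$ is used.

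It then remains to translate each such vertex into the asserted facet. Let $v$ be a vertex of $Q$, put $T = \{j : v_j \ne 0\}$ (so $|T| = k$) and $\delta_j = k v_j \in \{-1,1\}$ for $j \in T$. Then $F_v = \{x \in P : \sum_{j\in T}\delta_j x_j = k\}$, and since $v \in P^\star$ the hyperplane $H := \{x : \sum_{j\in T}\delta_j x_j = k\}$ supports $P$. Because $P = \mathrm{conv}\bigl((k\beta_d) \cup \gamma_d\bigr)$ with both $k\beta_d$ and $\gamma_d$ on the correct side of $H$, one gets $F_v = \mathrm{conv}\bigl((k\beta_d \cap H) \cup (\gamma_d \cap H)\bigr)$; a direct check gives $k\beta_d \cap H = \mathrm{conv}\{k\delta_j e_j : j \in T\}$ and $\gamma_d \cap H = \{x \in \gamma_d : x_j = \delta_j \text{ for all } j \in T\}$, the latter being precisely the $(d-k)$-face $\gamma_{d-k}$ of $\gamma_d$ contained in the $k$ facets $\{x_j = \delta_j\}$, $j \in T$, which are exactly the facets of $\gamma_d$ incident to $\gamma_{d-k}$. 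Finally, the points $k\delta_j e_j$, $j \in T$, are pairwise at distance $k\sqrt{2}$, hence are the vertices of a regular $(k-1)$-simplex, and $k\delta_j e_j$ is the center $\delta_j e_j$ of the facet $\{x_j = \delta_j\}$ of $\gamma_d$ pulled to distance $k-1$ from $\gamma_d$ along the $j$-th axis. As $v$ runs over all vertices of $Q$ and corresponds bijectively to the face $\gamma_{d-k}$, this produces exactly the list of facets in the statement; as a consistency check, $\dim F_v = (d-k) + (k-1) = d-1$.
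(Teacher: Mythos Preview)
Your proposal is correct and takes a genuinely different route from the paper. The paper does not give a formal proof of this theorem; instead, it argues by a continuous deformation in $k$, tracking how the facets of $k\rho_{d,k}$ evolve: for $1<k<2$ they are pyramids over the $(d-2)$-faces of $\gamma_d$, at $k=2$ adjacent pyramids merge into bipyramids, for $2<k<3$ these split again, and so on, until at each integer $k$ the facets are exactly the joins described in the statement. This is an informal geometric narrative rather than a self-contained argument.

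Your approach, by contrast, is algebraic and direct: you pass to the polar $P^\star=\beta_d\cap\frac{1}{k}\gamma_d$, determine its vertices by a rank analysis of the active constraints, and then read off each facet of $P$ as the face exposed by the corresponding vertex. The case split $a=p$ versus $a=p-1$ is exactly where integrality of $k$ enters, and it is handled cleanly. The translation back via $F_v=\mathrm{conv}\bigl((k\beta_d\cap H)\cup(\gamma_d\cap H)\bigr)$ is valid because a supporting hyperplane of a convex hull meets it precisely in the convex hull of its intersections with the generating sets. What your approach buys is rigor and brevity, and it immediately yields the vertex description of $\rho_{d,k}^\star$ recorded later in Remark~\ref{HCP.sec.1.rem.0}; what the paper's deformation picture buys is the geometric intuition that also motivates the non-integer case (Theorem~\ref{HCP.sec.1.thm.2}) and the polyhedral subdivision used in Section~\ref{HCP.sec.2}. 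One cosmetic point: your final ``consistency check'' $\dim F_v=(d-k)+(k-1)$ is not quite a computation of the dimension of the join (the affine hulls of $\gamma_{d-k}$ and of the simplex actually meet in a single point, the common centroid), but since $F_v$ is already known to be a facet this is harmless.
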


It is noteworthy that the regular simplex mentioned in the statement of Theorem~\ref{HCP.sec.1.thm.1} is a face of the cross-polytope $k\beta_d$.

\begin{rem}\label{HCP.sec.1.rem.0}
Let us assume that $k$ is an integer. According to Theorem~\ref{HCP.sec.1.thm.1}, the number of facets of $\rho_{d,k}$ is equal to the number of $(d-k)$-dimensional faces of a $d$-dimensional hypercube. As a consequence,
\begin{equation}\label{HCP.sec.1.rem.0.eq.1}
f_{d-1}(\rho_{d,k})=2^k{d\choose k}\mbox{.}
\end{equation}

By polarity, it further follows from Theorem~\ref{HCP.sec.1.thm.1} that the vertices of $\rho_{d,k}^\star$ are exactly the $2^k{d\choose k}$ points from $\{0,1,-1\}^d$ with $k$ non-zero coordinates. 
\end{rem}

When $k$ is not an integer, the facets of $\rho_{d,k}$ are obtained, in combinatorial terms, by splitting each of the facets of the polytope $\lfloor{k}\rfloor\rho_{d,\lfloor{k}\rfloor}$ into as many facets as a $(d-\lfloor{k}\rfloor)$-dimensional hypercube has.

\begin{thm}\label{HCP.sec.1.thm.2}
If $k$ is not an integer, then the facets of $k\rho_{d,k}$ are exactly the convex hulls of a $(d-\lfloor{k}\rfloor-1)$-dimensional face $\gamma_{d-\lfloor{k}\rfloor-1}$ of $\gamma_d$ with one of the $(\lfloor{k}\rfloor-1)$-dimensional simplices whose vertices are any $\lfloor{k}\rfloor$ of the points pulled from the centers of the $\lfloor{k}\rfloor+1$ facets of $\gamma_d$ incident to $\gamma_{d-\lfloor{k}\rfloor-1}$.
\end{thm}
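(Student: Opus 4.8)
The plan is to pass to the polar polytope. Since $k\rho_{d,k}=\mathrm{conv}(\{-1,1\}^d\cup\{\pm ke_i:1\le i\le d\})$, its polar is, up to the harmless dilation by $\tfrac1k$, the polytope $\rho_{d,k}^\star=(k\beta_d)\cap\gamma_d=\{x\in\mathbb{R}^d:\|x\|_\infty\le1\text{ and }\|x\|_1\le k\}$ of (\ref{HCP.sec.0.eq.1}), and the facets of $k\rho_{d,k}$ correspond bijectively, by polar duality, to the vertices of that polytope---the facet associated with a vertex $v$ being $\{x\in k\rho_{d,k}:\langle v,x\rangle=k\}$. So the statement will follow from two things: (i) a description of the vertices of $\rho_{d,k}^\star$, and (ii) an identification of the corresponding facets of $k\rho_{d,k}$. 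Writing $m=\lfloor k\rfloor$, so that $1\le m\le d-1$ because $k\in(1,d)$, the key claim is that the vertices of $\rho_{d,k}^\star$ are exactly the vectors with $m$ coordinates equal to $\pm1$, one coordinate equal to $\pm(k-m)$, and all remaining coordinates equal to $0$.

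To prove this claim, let $v$ be a vertex of $\rho_{d,k}^\star$ and set $T=\{i:|v_i|=1\}$; then $|T|\le\|v\|_1\le k<m+1$, so $|T|\le m$. If $\|v\|_1<k$, the constraint $\|x\|_1\le k$ is slack near $v$, so $v$ is a vertex of the face $\{x:x_i=v_i\text{ for }i\in T\}$ of $\gamma_d$, which forces $T=\{1,\dots,d\}$ and hence $\|v\|_1=d$---impossible when $k<d$. So $\|v\|_1=k$. Near $v$ each coordinate in $T$ keeps its sign and each coordinate outside $T$ stays strictly between $-1$ and $1$, so the feasible set coincides locally with $\{x:x_i=v_i\text{ for }i\in T\}\cap\{x:\sum_{i\notin T}|x_i|\le k-|T|\}$; hence the restriction of $v$ to the free coordinates must be a vertex of the cross-polytope $\{y:\|y\|_1\le k-|T|\}$, that is, of the form $\pm(k-|T|)e_{i_0}$ for a single index $i_0\notin T$. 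Feasibility of $v$, namely $v\in\gamma_d$, then forces $k-|T|\le1$, so $|T|\ge m$ and therefore $|T|=m$, giving exactly the claimed form. Running the same local analysis in reverse shows conversely that every such vector is a vertex of $\rho_{d,k}^\star$.

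Finally, fix a vertex $v$ of this form and evaluate $\langle v,\cdot\rangle$ on the generators of $k\rho_{d,k}$. On a point of $\{-1,1\}^d$ this functional is at most $\|v\|_1=k$, with equality exactly when the point agrees with $v$ on the $(m+1)$-element support of $v$; those points form the $(d-m-1)$-dimensional face $\gamma_{d-m-1}$ of $\gamma_d$ obtained by fixing those $m+1$ coordinates. On a pulled vertex $\pm ke_i$ the functional is at most $k|v_i|\le k$, with equality exactly for the $m$ vertices $k\,\mathrm{sign}(v_i)e_i$ with $i\in T$, which are precisely the points pulled from the centres of $m$ of the $m+1$ facets of $\gamma_d$ incident to $\gamma_{d-m-1}$---the omitted facet being the one indexed by the coordinate at which $v$ equals $\pm(k-m)$. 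Hence the facet of $k\rho_{d,k}$ associated with $v$ is the convex hull of $\gamma_{d-m-1}$ together with the $(m-1)$-simplex on those $m$ pulled vertices, exactly the shape in the statement, and it has dimension $d-1$ by polar duality. Letting $v$ range over all vertices found in the claim, its support-and-sign data ranges over all $(d-m-1)$-faces of $\gamma_d$ while its distinguished coordinate ranges over all $m+1$ choices of an incident facet to omit, so one recovers precisely the list of facets asserted. I expect the main obstacle to be the part of the claim forbidding vertices with fewer than $m$ coordinates equal to $\pm1$: one has to use that a zero coordinate of $v$ lies on no facet of $\gamma_d$ and so contributes no active constraint on its own, which is exactly what the local normal-form reduction in the second paragraph is meant to take care of.
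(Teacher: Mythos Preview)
Your argument is correct and takes a genuinely different route from the paper's. The paper does not give a formal proof of this theorem; it argues by continuous deformation, tracking how the boundary of $k\rho_{d,k}$ changes as $k$ grows: when $k$ crosses the integer $m$, each facet $\mathrm{conv}(\gamma_{d-m}\cup\alpha_{m-1})$ of $m\rho_{d,m}$ breaks into $2(d-m)$ pieces indexed by the facets of $\gamma_{d-m}$, and these pieces are the facets described in the statement until $k$ reaches $m+1$. You instead pass to the polar, determine the vertex set of $\rho_{d,k}^\star=(k\beta_d)\cap\gamma_d$ by an active-constraint analysis, and read off the facets of $k\rho_{d,k}$ from the duality pairing $\langle v,\cdot\rangle=k$. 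Your approach is more self-contained and rigorous, and it yields as a byproduct the explicit vertex description of $\rho_{d,k}^\star$ for non-integer $k$ (the paper records only the integer case, in Remark~\ref{HCP.sec.1.rem.0}); the paper's deformation picture, on the other hand, makes the genealogy of the facets visible and feeds naturally into the face counts and volume computations that follow.

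One wording issue in your middle paragraph: the clause ``the feasible set coincides locally with $\{x:x_i=v_i\text{ for }i\in T\}\cap\{x:\sum_{i\notin T}|x_i|\le k-|T|\}$'' is not literally correct, since that set has dimension $d-|T|$ while $\rho_{d,k}^\star$ is $d$-dimensional near $v$. What is true, and what your argument actually uses, is that this set agrees \emph{near $v$} with the face $\rho_{d,k}^\star\cap\bigcap_{i\in T}\{x_i=v_i\}$ (the hypercube constraints for $i\notin T$ being inactive there); since a vertex of a polytope is a vertex of every face containing it, it then follows that the restriction of $v$ to the coordinates outside $T$ is a vertex of the cross-polytope $\{y:\|y\|_1\le k-|T|\}$, and the rest goes through.
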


Based on Theorems \ref{HCP.sec.1.thm.1} and \ref{HCP.sec.1.thm.2}, we now compute the $f$-vector of $\rho_{d,k}$. From there on, we denote by $f_i(P)$ the number of $i$-dimensional faces of a polytope $P$. According to our description, $\rho_{d,k}$ has $2^d+2d$ vertices when $1<k<d$. Recall that $\rho_{d,1}$ is a $d$-dimensional hypercube and $\rho_{d,d}$ a $d$-dimensional cross-polytope, whose number of vertices are $2^d$ and $2d$, respectively. 

As mentioned above, in the case when $k$ is an integer, $f_{d-1}(\rho_{d,k})$ is the number of $(d-k)$-dimensional faces of a $d$-dimensional hypercube. By Theorem \ref{HCP.sec.1.thm.2}, when $k$ is not an integer the number of facets of $\rho_{d,k}$ is the product of the number of $(d-\lfloor{k}\rfloor-1)$-dimensional faces of a $d$-dimensional hypercube with the number of facets of a $\lfloor{k}\rfloor$-dimensional simplex, that is 
$$
f_{d-1}(\rho_{d,k})=2^{\lfloor{k}+1\rfloor}{d\choose \lfloor{k}\rfloor+1}\!\left(\lfloor{k}\rfloor+1\right)\!\mbox{.}
$$

In order to complete the $f$-vector of $\rho_{d,k}$ when $k$ is an integer, let us remark that the faces of $\rho_{d,k}$ are of three types: they can be faces of the hypercube $\gamma_d/k$, faces of the cross-polytope $\beta_d$, or neither. We first compute the number of the $i$-dimensional faces of $\rho_{d,k}$ of the latter type.

\begin{lem}\label{HCP.sec.1.lem.1}
If $k$ is an integer, and $i$ satisfies $1\leq{i}\leq{d-2}$, then the number of the $i$-dimensional faces of $\rho_{d,k}$ that are neither a face of the hypercube $\gamma_d/k$, nor a face of the cross-polytope $\beta_d$ is
$$
{d\choose{k}}\sum_{j=l}^u2^{d-j}\frac{{d-k\choose{j}}{k\choose{i-j}}}{{{d-i}\choose{d-k-j}}}\mbox{,}
$$
where $l=\max\{0,i-k+1\}$ and $u=\min\{i-1,d-k-1\}$.
\end{lem}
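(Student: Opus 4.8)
The plan is to work with the dilate $k\rho_{d,k}$, which has the same face lattice as $\rho_{d,k}$, to count the desired faces by running through the facets of $k\rho_{d,k}$ (described by Theorem~\ref{HCP.sec.1.thm.1}), and to divide, in the end, by the number of facets that each such face belongs to. I would dispose of the cases $k=1$ and $k=d$ first: then $\rho_{d,k}$ is $\gamma_d$ or $\beta_d$, there is no face of the stated type, and the sum in the statement is empty; so assume $2\le k\le d-1$.

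The key structural observation is that each facet $F$ of $k\rho_{d,k}$ is, up to an affine isomorphism, the free sum $\gamma_{d-k}\oplus\sigma$ of a $(d-k)$-dimensional cube with a $(k-1)$-dimensional simplex. Indeed, Theorem~\ref{HCP.sec.1.thm.1} gives $F=\mathrm{conv}(\gamma_{d-k}\cup\sigma)$, where $\gamma_{d-k}$ is a $(d-k)$-face of $\gamma_d$ and $\sigma$ is the regular simplex on the $k$ points pulled from the centers of the facets of $\gamma_d$ incident to $\gamma_{d-k}$; one checks that $\gamma_{d-k}$ and $\sigma$ lie in affine subspaces that meet in exactly one point and whose dimensions sum to $d-1=\dim F$, and that this point is simultaneously the center of $\gamma_{d-k}$ and the centroid of $\sigma$, hence lies in the relative interior of each of them---which is exactly the situation of a free sum. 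I would then recall the standard description of the proper faces of a free sum $P\oplus Q$: apart from the empty face, they are the proper faces of $P$, the proper faces of $Q$, and the joins $R\ast T$ of a nonempty proper face $R$ of $P$ with a nonempty proper face $T$ of $Q$, with $\dim(R\ast T)=\dim R+\dim T+1$. Applied to $F$, this sorts the proper faces of $F$ into three classes: proper faces of $\gamma_{d-k}$, which are faces of $\gamma_d$ and hence of $\gamma_d/k$; proper faces of $\sigma$, which are faces of $k\beta_d$ since $\sigma$ itself is one (as noted after Theorem~\ref{HCP.sec.1.thm.1}); and joins $R\ast T$, each of which has at least one vertex of $\gamma_d$ and at least one vertex of $k\beta_d$, and is therefore neither a face of $\gamma_d/k$ nor a face of $\beta_d$.

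Next I would check that the faces counted by the lemma are exactly the joins $R\ast T$ of dimension $i$ arising this way. Every proper face of $k\rho_{d,k}$ lies in some facet, and a face of $k\rho_{d,k}$ all of whose vertices are vertices of $\gamma_d$ (respectively of $k\beta_d$) is automatically a face of $\gamma_d/k$ (respectively of $\beta_d$); hence a face that is neither must appear as a join $R\ast T$ in each facet that contains it, and since $i\le d-2$ this join is a proper face of that facet; conversely every such join is a face of $k\rho_{d,k}$ of the required type. The remaining task is to count these joins without repetition. Inside one facet $F\cong\gamma_{d-k}\oplus\sigma$, the joins $R\ast T$ of dimension $i$ with $\dim R=j$ are obtained by choosing a $j$-face of $\gamma_{d-k}$, of which there are $2^{d-k-j}\binom{d-k}{j}$, and an $(i-j-1)$-face of $\sigma$, of which there are $\binom{k}{i-j}$; the requirement that both be nonempty and proper is precisely $l\le j\le u$. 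There are $2^{k}\binom{d}{k}$ facets, by Remark~\ref{HCP.sec.1.rem.0}. Finally, to determine the multiplicity, I would write $R$ as the $j$-face of $\gamma_d$ that fixes the coordinates in a set $A$ of cardinality $d-j$ (with prescribed signs) and let $T$ correspond to a subset $T'$ of $A$ of cardinality $i-j$ (the signs on $T'$ being inherited from $R$); the facet attached to the $(d-k)$-face of $\gamma_d$ fixing the coordinates in a $k$-set $S$ contains the join $R\ast T$ if and only if $T'\subseteq S\subseteq A$, and the number of such $S$ is $\binom{(d-j)-(i-j)}{k-(i-j)}=\binom{d-i}{d-k-j}$. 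Since this multiplicity depends only on $i$ and $j$, summing $2^{k}\binom{d}{k}\cdot 2^{d-k-j}\binom{d-k}{j}\binom{k}{i-j}$ over $l\le j\le u$ and dividing by $\binom{d-i}{d-k-j}$ yields the announced formula, using $2^{k}\cdot 2^{d-k-j}=2^{d-j}$.

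The part I expect to cost the most care is the structural identification in the second paragraph---recognizing the facets as free sums and matching, facet by facet, the trichotomy (face of $\gamma_d/k$, face of $\beta_d$, or neither) of faces of $k\rho_{d,k}$ with the trichotomy of faces of a free sum---together with pinning down the multiplicity $\binom{d-i}{d-k-j}$. It is also worth noting why the hypothesis $i\le d-2$ is needed: it is exactly what forces a face of the stated type to be a \emph{proper} join $R\ast T$ in every facet containing it. For $i=d-1$ the facets themselves are faces of the ``neither'' type, the displayed sum is empty, and the correct count would be $2^{k}\binom{d}{k}$.
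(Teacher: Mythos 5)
Your proof is correct and follows essentially the same route as the paper: describe the facets of $k\rho_{d,k}$ via Theorem~\ref{HCP.sec.1.thm.1}, observe that the faces in question are exactly the joins of a nonempty proper face of $\gamma_{d-k}$ with a nonempty proper face of the simplex (your free-sum formalism is just a cleaner packaging of the paper's orthogonal-affine-hulls argument), count these per facet, and divide by the incidence multiplicity $\binom{d-i}{d-k-j}$, which you compute by the same $T'\subseteq S\subseteq A$ encoding that underlies the paper's count. Your treatment is in fact somewhat more careful than the paper's on the trichotomy of faces and on why the bounds $l\le j\le u$ arise.
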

\begin{proof}
Assume that $k$ is an integer. According to Theorem \ref{HCP.sec.1.thm.1} any facet of $\rho_{d,k}$ is the convex hull of the union of a $(d-k)$-dimensional face $\gamma_{d-k}$ of the hypercube $\gamma_d/k$ and the $(k-1)$-dimensional regular simplex $\alpha_{k-1}$ whose vertices are the points pulled from the centers of the $k$ facets of $\gamma_d$ incident to $\gamma_{d-k}$. By construction the affine hulls of $\gamma_{d-k}$ and $\alpha_{k-1}$ are orthogonal subspaces of $\mathbb{R}^d$. Therefore, the proper faces of $\mathrm{conv}(\gamma_{d-k}\cup\alpha_{k-1})$ that are not a face of $\gamma_{d-k}$ or a face of $\alpha_{k-1}$ are exactly the convex hulls of the union of a proper face of $\gamma_{d-k}$ and a proper face of $\alpha_{k-1}$. Moreover the dimension this convex hull is greater by one than the sum of the dimension of the faces of $\gamma_{d-k}$ and $\alpha_{k-1}$ it is constructed from. Let us consider a face that arises this way from a $j$-dimensional face $\gamma_j$ of $\gamma_{d-k}$ and a $(i-j-1)$-dimensional face $\alpha_{i-j-1}$ of $\alpha_{k-1}$. Since $\gamma_{d-k}$ is a $(d-k)$-dimensional cube, 
$$
f_j(\gamma_{d-k})=2^{d-k-j}{d-k\choose{j}}\mbox{.}
$$

Since $\alpha_{k-1}$ is a $(k-1)$-dimensional simplex,
$$
f_{i-j-1}(\alpha_{k-1})={k\choose{i-j}}\mbox{.}
$$

Therefore, $\mathrm{conv}(\gamma_{d-k}\cup\alpha_{k-1})$ admits exactly
$$
2^{d-k-j}{d-k\choose{j}}{k\choose{i-j}}
$$
faces of such as $\mathrm{conv}(\gamma_j\cup\alpha_{i-j-1})$. Now observe that if we would multiply this quantity by the number of facets of $\rho_{d,k}$, $\mathrm{conv}(\gamma_j\cup\alpha_{i-j-1})$ would be counted as many times as the number of facets of $\rho_{d,k}$ it is incident to. Let us compute this number. The facets of $\rho_{d,k}$ incident incident to $\mathrm{conv}(\gamma_j\cup\alpha_{i-j-1})$ are obtained by choosing one of the $(d-k)$-dimensional faces $F$ of the hypercube $\gamma_d/k$ incident to $\gamma_j$ and contained in all the facets of $\gamma_d/k$ the vertices of $\alpha_{i-j-1}$ are pulled from, and by then taking the convex hull of its union with the $(k-1)$-dimensional simplex whose vertices are the points pulled from the centers of the $k$ facets of $\gamma_d$ incident to $F$. Hence, there are
$$
{d-i}\choose{d-k-j}
$$
possible choices for $F$, and $\mathrm{conv}(\gamma_j\cup\alpha_{i-j-1})$ is incident to that number of facets of $\rho_{d,k}$. According to these observations, there are
$$
{d\choose{k}}2^{d-j}\frac{{d-k\choose{j}}{k\choose{i-j}}}{{{d-i}\choose{d-k-j}}}
$$
faces of $\rho_{d,k}$ obtained as the convex hull of the union of a $j$-dimensional face of $\gamma_d/k$ with a $(i-j-1)$-dimensional face of $\beta_d$. Such faces of $\gamma_d/k$ and $\beta_d$ exist if and only if $l\leq{j}\leq{u}$ with $l=\max\{0,i-k+1\}$ and $u=\min\{i-1,d-k-1\}$, which completes the proof.
\end{proof}

Now recall that, if $k>1$, then by our description of $\rho_{d,k}$, the hypercube $\gamma_d/k$ shares all of its faces of dimension less than $d-k$ with $\rho_{d,k}$ and no other. Similarly, if $k<d$, then the cross polytope $\beta_d$ shares all of its faces of dimension less than $k-1$ with $\rho_{d,k}$, and no other face.

As a consequence of these observations, we obtain the following.

\begin{lem}\label{HCP.sec.1.lem.2}
If $k>1$, then $\rho_{d,k}$ and $\gamma_d/k$ share
$$
2^{d-i}{d\choose{i}}
$$
faces of dimension $i$ when $0\leq{i}<d-k$ and they do not share any face of dimension $i$ when $d-k\leq{i}<d$. If $k<d$, then $\rho_{d,k}$ and $\beta_d$ share
$$
2^{i+1}{d\choose{i+1}}
$$
faces of dimension $i$ when $0\leq{i}<k-1$ and these polytopes do not have any common face of dimension $i$ when $k-1\leq{i}<d$. 
\end{lem}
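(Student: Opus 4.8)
The plan is to deduce the statement from two ingredients. The first is the combinatorial description, announced just before the lemma, of which faces of $\gamma_d/k$ and of $\beta_d$ persist as faces of $\rho_{d,k}$. The second consists of the elementary face counts $f_i(\gamma_d)=2^{d-i}\binom{d}{i}$, obtained by fixing $d-i$ of the coordinates to $\pm1$, and $f_i(\beta_d)=2^{i+1}\binom{d}{i+1}$, obtained by choosing $i+1$ of the $d$ coordinate axes together with a sign on each. Since a polytope and its dilates have the same face lattice, I would argue throughout with $k\rho_{d,k}=\mathrm{conv}(\gamma_d\cup k\beta_d)$: inside it, the copy of $\gamma_d/k$ becomes $\gamma_d=[-1,1]^d$ and the copy of $\beta_d$ becomes $k\beta_d$, whose $2d$ vertices $\pm ke_i$ are the facet-centres of $\gamma_d$ pulled out to distance $k-1$. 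In these terms one has to prove that, for $k>1$, the faces of $\gamma_d$ that are faces of $k\rho_{d,k}$ are precisely those of dimension less than $d-k$, and that, for $k<d$, the faces of $k\beta_d$ that are faces of $k\rho_{d,k}$ are precisely those of dimension less than $k-1$; the displayed numbers and vanishing ranges then follow by substituting the counts above.

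To handle the faces that survive I would exhibit explicit exposing functionals. Let $G$ be the face of $\gamma_d$ cut out by fixing the coordinates in a set $S$, with $|S|>k$, to prescribed signs $\epsilon_s\in\{-1,1\}$, so that $\dim G=d-|S|<d-k$. The functional $x\mapsto\sum_{s\in S}\epsilon_sx_s$ attains its maximum $|S|$ over $\gamma_d$ exactly on $G$, while at every vertex $\pm ke_t$ of $k\beta_d$ it takes a value at most $k<|S|$; as each point of $k\rho_{d,k}$ is a convex combination of vertices of $\gamma_d$ and of $k\beta_d$, this functional is maximised over $k\rho_{d,k}$ exactly on $G$, so $G$ is a face of $k\rho_{d,k}$. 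Symmetrically, each $i$-face of $k\beta_d$ with $i<k-1$ is, after relabelling the coordinates and adjusting signs, $\mathrm{conv}\{ke_1,\dots,ke_{i+1}\}$, and the functional $x\mapsto x_1+\cdots+x_{i+1}$ is maximised over $k\beta_d$ exactly on that face while taking a value at most $i+1<k$ at every vertex of $\gamma_d$; hence it exposes that face in $k\rho_{d,k}$ too. This yields one inclusion in each of the two claims.

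For the reverse inclusions I would use the facet descriptions in \thmref{HCP.sec.1.thm.1} and \thmref{HCP.sec.1.thm.2}: every facet of $k\rho_{d,k}$ is $\mathrm{conv}(\gamma\cup\tau)$ with $\gamma$ a face of $\gamma_d$ and $\tau$ a simplex on pulled facet-centres, where $(\dim\gamma,\dim\tau)=(d-k,k-1)$ if $k$ is an integer and $(\dim\gamma,\dim\tau)=(d-\lfloor k\rfloor-1,\lfloor k\rfloor-1)$ otherwise, and the vertex set of such a facet is the disjoint union of that of $\gamma$ (vertices of $\gamma_d$) and that of $\tau$ (vertices of $k\beta_d$). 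If a face $H$ of $\gamma_d$ were a face of $k\rho_{d,k}$, then, lying in some facet $\mathrm{conv}(\gamma\cup\tau)$, all its vertices---being vertices of $k\rho_{d,k}$ inside that facet---would be vertices of $\gamma$, so $\dim H\le\dim\gamma$; symmetrically, a face of $k\beta_d$ that is a face of $k\rho_{d,k}$ has dimension at most $\dim\tau$. This rules out every face of $\gamma_d$ of dimension above $\dim\gamma$ and every face of $k\beta_d$ of dimension above $\dim\tau$, which already matches the thresholds when $k$ is not an integer, since there the affine hulls of $\gamma$ and $\tau$ are skew, so $\mathrm{conv}(\gamma\cup\tau)$ is a free join and $\gamma$, $\tau$ are themselves faces of $k\rho_{d,k}$. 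The delicate case, which I would treat last, is that of integer $k$, where $\dim\gamma_{d-k}=d-k$ and $\dim\alpha_{k-1}=k-1$ meet the above bounds exactly, so one must check that $\gamma_{d-k}$ and $\alpha_{k-1}$ are \emph{not} faces of $k\rho_{d,k}$ and no free-join structure is available. For this I would observe that the affine hulls of $\gamma_{d-k}$ and $\alpha_{k-1}$ are orthogonal and meet in the single point $p$ that is at once the centre of $\gamma_{d-k}$ and the barycentre of $\alpha_{k-1}$; since the tangent directions of the facet $\mathrm{conv}(\gamma_{d-k}\cup\alpha_{k-1})$ at $p$ split as those of $\gamma_{d-k}$ together with those of $\alpha_{k-1}$, and $p$ can be perturbed within each, $p$ lies in the relative interior of that facet, and as it also lies in the relative interiors of $\gamma_{d-k}$ and of $\alpha_{k-1}$, neither of these can be a face of $k\rho_{d,k}$. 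This integer case, where one argues with relative interiors rather than vertex sets, is the only step I expect to demand real care; combined with the two inclusions and the face counts $2^{d-i}\binom{d}{i}$, $2^{i+1}\binom{d}{i+1}$, it gives both displayed formulas together with both vanishing ranges.
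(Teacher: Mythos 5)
Your proof is correct, and it follows the same route the paper takes -- the paper in fact offers no formal proof of this lemma, presenting it as an immediate consequence of the preceding informal description of the pulling process (``the hypercube $\gamma_d/k$ shares all of its faces of dimension less than $d-k$ with $\rho_{d,k}$ and no other\dots'') together with the standard face counts $f_i(\gamma_d)=2^{d-i}\binom{d}{i}$ and $f_i(\beta_d)=2^{i+1}\binom{d}{i+1}$. What you add, and add correctly, is a rigorous justification of that description: the exposing functionals $\sum_{s\in S}\epsilon_s x_s$ (with $|S|>k$) and $x_{t_1}+\dots+x_{t_{i+1}}$ (with $i+1<k$) do certify the surviving faces, since the competing vertices of $k\beta_d$ (respectively of $\gamma_d$) attain values at most $k<|S|$ (respectively at most $i+1<k$); and the facet-containment bound $\dim H\le\dim\gamma$, $\dim H\le\dim\tau$ correctly disposes of the non-integer case, where the thresholds are strict. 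Your treatment of the delicate integer case is also sound: with $\gamma_{d-k}$ the face where the coordinates indexed by $S$ equal $1$, its centre and the barycentre of $\alpha_{k-1}=\mathrm{conv}\{ke_s:s\in S\}$ both equal $\sum_{s\in S}e_s$, and since the convex hull of two relatively open neighbourhoods of this point in complementary affine subspaces contains a relatively open neighbourhood of it in the facet's affine hull, the point lies in the relative interior of the facet, so neither $\gamma_{d-k}$ nor $\alpha_{k-1}$ can be a proper face of $k\rho_{d,k}$. In short, you have supplied the proof the paper leaves implicit, and the only genuinely new ingredient relative to the paper's narrative is this relative-interior argument at the boundary dimensions.
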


The following is an immediate consequence of Lemmas \ref{HCP.sec.1.lem.1} and \ref{HCP.sec.1.lem.2}.

\begin{thm}\label{HCP.sec.1.thm.3}
If $1\leq{i<d-1}$ and $k$ is an integer satisfying $1<k<d$ then\medskip
\begin{itemize}
\item[(i)] $\displaystyle f_i(\rho_{d,k})=f^\star$ when $i\geq\max\{d-k,k-1\}$,\medskip
\item[(ii)] $\displaystyle f_i(\rho_{d,k})=2^{i+1}{d\choose{i+1}}+f^\star$ when $d-k\leq{i}<k-1$,
\item[(iii)] $\displaystyle f_i(\rho_{d,k})=2^{d-i}{d\choose{i}}+f^\star$ when $k-1\leq{i}<d-k$,
\item[(iv)] $\displaystyle f_i(\rho_{d,k})=2^{i+1}{d\choose{i+1}}+2^{d-i}{d\choose{i}}+f^\star$ when $i<\min\{d-k,k-1\}$,
\end{itemize}\medskip
where, in the right-hand side of these equalities,
$$
f^\star={d\choose{k}}\sum_{j=l}^u2^{d-j}\frac{{d-k\choose{j}}{k\choose{i-j}}}{{{d-i}\choose{d-k-j}}}\mbox{,}
$$
with $l=\max\{0,i-k+1\}$ and $u=\min\{i-1,d-k-1\}$.
\end{thm}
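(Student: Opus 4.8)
The plan is to partition the $i$-dimensional faces of $\rho_{d,k}$ into three classes and count each of them separately. Fix an integer $k$ with $1<k<d$ and an integer $i$ with $1\leq{i}<d-1$. I would call an $i$-face of $\rho_{d,k}$ a \emph{cube face} if it is a face of the hypercube $\gamma_d/k$, a \emph{cross-polytope face} if it is a face of $\beta_d$, and a \emph{mixed face} otherwise. The first step is to check that these three classes are pairwise disjoint and together exhaust the $i$-faces of $\rho_{d,k}$. Disjointness of the cube faces from the cross-polytope faces is immediate because, when $k>1$, the hypercube $\gamma_d/k$ and the cross-polytope $\beta_d$ have no vertex in common, hence no common nonempty face, while the mixed faces are disjoint from the other two classes by definition. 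For exhaustiveness, I would use that every proper face of $\rho_{d,k}$ lies in a facet, that by Theorem~\ref{HCP.sec.1.thm.1} every facet of $\rho_{d,k}$ is the convex hull $\mathrm{conv}(\gamma_{d-k}\cup\alpha_{k-1})$ of a $(d-k)$-dimensional face $\gamma_{d-k}$ of $\gamma_d/k$ and the associated $(k-1)$-dimensional regular simplex $\alpha_{k-1}$, and that---as already established in the proof of Lemma~\ref{HCP.sec.1.lem.1} from the orthogonality of the affine hulls of $\gamma_{d-k}$ and $\alpha_{k-1}$---every face of such a facet is a face of $\gamma_{d-k}$, a face of $\alpha_{k-1}$, or the convex hull of a proper face of each. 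The first kind is a cube face, the second a cross-polytope face since $\alpha_{k-1}$ is a face of $k\beta_d$, and the third a mixed face.

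Once the partition is granted, $f_i(\rho_{d,k})$ is the sum of the numbers of faces of the three kinds, so the second step is just to add up what the lemmas already provide. Lemma~\ref{HCP.sec.1.lem.1}, which applies because $1\leq{i}\leq{d-2}$, counts the mixed faces as $f^\star$. Lemma~\ref{HCP.sec.1.lem.2} counts the cube faces as $2^{d-i}{d\choose{i}}$ when $i<d-k$ and as $0$ when $i\geq{d-k}$, and the cross-polytope faces as $2^{i+1}{d\choose{i+1}}$ when $i<k-1$ and as $0$ when $i\geq{k-1}$. The third step is the case analysis obtained by comparing $i$ with the two thresholds $d-k$ and $k-1$: the regime $i<\min\{d-k,k-1\}$ keeps both extra terms and gives (iv); the regime $d-k\leq{i}<k-1$ keeps only the cross-polytope term and gives (ii); the regime $k-1\leq{i}<d-k$ keeps only the cube term and gives (iii); and the regime $i\geq\max\{d-k,k-1\}$ keeps neither and gives (i).

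All the mathematical substance is contained in Lemmas~\ref{HCP.sec.1.lem.1} and~\ref{HCP.sec.1.lem.2}, so I do not expect a genuine difficulty; the one point deserving care is the exhaustiveness-and-disjointness claim of the first step, and in particular the implicit fact that a top-dimensional face $\gamma_{d-k}$ of a facet, or the whole simplex $\alpha_{k-1}$, is not itself a face of $\rho_{d,k}$. This is exactly what is encoded by the upper ranges in Lemma~\ref{HCP.sec.1.lem.2} (no shared cube face in dimensions $d-k\leq{i}<d$, no shared cross-polytope face in dimensions $k-1\leq{i}<d$). With the partition in place, the theorem reduces to the bookkeeping described above.
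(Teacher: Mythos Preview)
Your proposal is correct and follows exactly the approach the paper takes: the paper sets up the same three-way partition (faces of $\gamma_d/k$, faces of $\beta_d$, neither) just before Lemma~\ref{HCP.sec.1.lem.1} and then declares the theorem an ``immediate consequence'' of Lemmas~\ref{HCP.sec.1.lem.1} and~\ref{HCP.sec.1.lem.2}. You have simply made the disjointness/exhaustiveness check and the final case split explicit, which the paper leaves to the reader.
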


\begin{rem}
A conjecture by Kalai \cite{Kalai1989} states that a $d$-dimensional centrally-symmetric polytope always has at least $3^d$ non-empty faces. The polytopes $\rho_{d,k}$ satisfy this conjecture. Indeed, recall that, when $k$ is an integer such that $1<k<d$, $\rho_{d,k}$ has $2d+2^d$ vertices and $2^k{d\choose k}$ facets. Ignoring the $f^\star$ terms in the expression of $f_i(\rho_{d,k})$ provided by Theorem~\ref{HCP.sec.1.thm.3}, one obtains that the number of non-empty faces of $\rho_{d,k}$ is at least
$$
1+2d+2^d+2^k{d\choose{k}}+\sum_{i=2}^{k-1}2^i{d\choose{i}}+\sum_{i=1}^{d-k-1}2^{d-i}{d\choose{i}}\mbox{,}
$$
a sum that can be rearranged into the binomial expansion of $(1+2)^d$.
\end{rem}

\section{The geometry of $\rho_{d,k}$}\label{HCP.sec.2}

Let us recall that $\rho_{d,k}$ is introduced in \cite{GaudiosoGorgoneHiriart-Urruty2020} as an intersection of half-spaces of $\mathbb{R}^d$. We recover this description as a consequence of Theorem \ref{HCP.sec.1.thm.1}.

\begin{cor}\label{HCP.sec.2.cor.1}
If $k$ is an integer, then $\rho_{d,k}$ is the set of the points $x$ in $\mathbb{R}^d$ such that the absolute value of any $k$ coordinates of $x$ sum to at most $1$.
\end{cor}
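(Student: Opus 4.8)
The plan is to read off the facet-defining inequalities of the dilate $k\rho_{d,k}$ from Theorem~\ref{HCP.sec.1.thm.1} and then rescale. Fix a $k$-element subset $\{i_1,\dots,i_k\}$ of $\{1,\dots,d\}$ and a sign vector $(\epsilon_1,\dots,\epsilon_k)$ in $\{-1,1\}^k$; together these specify the $(d-k)$-dimensional face $\gamma_{d-k}$ of $\gamma_d$ consisting of the points of $[-1,1]^d$ whose $i_j$-th coordinate equals $\epsilon_j$ for every $j$. The $k$ facets of $\gamma_d$ incident to $\gamma_{d-k}$ are those of the form $\{x:x_{i_j}=\epsilon_j\}$, and once their centers are pulled to the vertices of $k\beta_d$ they become the points $k\epsilon_j e_{i_j}$. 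By Theorem~\ref{HCP.sec.1.thm.1}, the corresponding facet of $k\rho_{d,k}$ is the convex hull of $\gamma_{d-k}$ together with these $k$ points. A direct substitution shows that every vertex of $\gamma_{d-k}$, as well as every point $k\epsilon_j e_{i_j}$, satisfies $\sum_{j=1}^{k}\epsilon_j x_{i_j}=k$, so this equation defines the affine hyperplane spanned by that facet.

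Next I would check that $k\rho_{d,k}$ lies in the half-space $\{x:\sum_{j=1}^k\epsilon_j x_{i_j}\leq k\}$. Since $k\rho_{d,k}=\mathrm{conv}(k\beta_d\cup\gamma_d)$ is full-dimensional, it suffices to evaluate the left-hand side at the vertices of $k\beta_d$ — the points $\pm ke_i$, where it equals $0$ or $\pm k$ — and at the vertices of $\gamma_d$ — the points of $\{-1,1\}^d$, where it is a sum of $k$ numbers each equal to $\pm1$; in both cases the value is at most $k$. As $(\{i_1,\dots,i_k\},(\epsilon_1,\dots,\epsilon_k))$ ranges over its $2^k{d\choose k}$ possibilities, Theorem~\ref{HCP.sec.1.thm.1} exhausts all facets of $k\rho_{d,k}$; since a polytope is the intersection of the half-spaces bounded by the hyperplanes spanned by its facets, this gives
$$
k\rho_{d,k}=\left\{x\in\mathbb{R}^d:\textstyle\sum_{j=1}^k\epsilon_j x_{i_j}\leq k\ \text{ for all }\{i_1,\dots,i_k\}\text{ and all }(\epsilon_1,\dots,\epsilon_k)\in\{-1,1\}^k\right\}\!\mbox{.}
$$

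To finish, I would observe that for a fixed index set $\{i_1,\dots,i_k\}$ the maximum of $\sum_{j=1}^k\epsilon_j x_{i_j}$ over $(\epsilon_1,\dots,\epsilon_k)\in\{-1,1\}^k$ equals $\sum_{j=1}^k|x_{i_j}|$, so the displayed system says precisely that the absolute values of any $k$ coordinates of $x$ sum to at most $k$. Since $\rho_{d,k}=\tfrac1k(k\rho_{d,k})$, dividing by $k$ yields the stated description. The only step deserving a word of care is the use of Theorem~\ref{HCP.sec.1.thm.1} to be sure the inequalities above are \emph{all} the facet inequalities; but any repetition among them leaves the intersection unchanged, so even this is harmless, and everything else is a one-line verification on vertices. (Alternatively, one could deduce the same conclusion by polarity from the description of the vertices of $\rho_{d,k}^\star$ given in Remark~\ref{HCP.sec.1.rem.0}, by computing $\max\{\langle x,v\rangle:v\in\{-1,0,1\}^d,\ \|v\|_0=k\}$.)
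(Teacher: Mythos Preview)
Your argument is correct and is exactly the derivation the paper has in mind: the corollary is stated without proof, merely as ``a consequence of Theorem~\ref{HCP.sec.1.thm.1}'', and you have supplied precisely that consequence by reading off the supporting hyperplane of each facet described there and then rescaling. The alternative via Remark~\ref{HCP.sec.1.rem.0} that you mention at the end is also valid and equally short.
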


In the remainder of the section, we compute the volume of $\rho_{d,k}$ and that of its boundary. Let us remark that our description of $k\rho_{d,k}$ naturally provides a polyhedral subdivision of this polytope into convex hulls of unions of hypercubes and simplices. Consider a $(d-l)$-dimensional face $\gamma_{d-l}$ of the hypercube $\gamma_d$ where $l<k$, and the regular $(l-1)$-dimensional simplex $\alpha_{l-1}$ whose vertices are the points pulled from the facets of $\gamma_d$ incident to $\gamma_{d-l}$ in our description of $\rho_{d,k}$. As $l<k$, $\mathrm{conv}(\gamma_{d-l}\cup\alpha_{l-1})$ is a $d$-dimensional polytope. When $l=0$, we will take as a convention that $\gamma_{d-l}$ is the whole hypercube $\gamma_d$ and $\alpha_{l-1}$ is the empty set. The family of these polytopes when $l$ ranges from $0$ to $\lfloor{k}\rfloor$ form a subdivision of $\rho_{d,k}$. This subdivision turns out to be regular. In other words, it can be recovered by projecting to $\mathbb{R}^d$ the lower faces of a $(d+1)$-dimensional polytope \cite{DeLoeraRambauSantos2010}. In this particular case, an example of such a polytope can be obtained by identifying $\mathbb{R}^d$ as the subspace of $\mathbb{R}^{d+1}$ spanned by the first $d$ coordinates, by leaving the vertices of $\gamma_d$ within $\mathbb{R}^d$, by lifting the vertices of $k\beta_d$ in the hyperplane of $\mathbb{R}^{d+1}$ wherein the last coordinate is equal to $1$, and by taking the convex hull of all the resulting points.

According to this discussion, the volume of $\rho_{d,k}$ can be obtained from that of $\mathrm{conv}(\gamma_{d-l}\cup\alpha_{l-1})$. This polytope can be alternatively built by starting from $\gamma_{d-l}$, constructing the pyramid over $\gamma_{d-l}$ whose apex is a vertex of $\alpha_{l-1}$, then taking the pyramid over that pyramid whose apex is another vertex of $\alpha_{l-1}$, and so on until all the vertices of $\alpha_{l-1}$ have been used.

As a consequence, in order to obtain the volume of this polytope, we first compute the distance of a vertex $x$ of $\alpha_{l-1}$ to the affine hull of the union of $\gamma_{d-l}$ and of $i$ vertices of $\alpha_{l-1}$ other than $x$.

\begin{lem}\label{HCP.sec.2.lem.1}
The distance between a vertex $x$ of $\alpha_{l-1}$ and the affine space spanned by $\gamma_{d-l}$ and by $i$ vertices of $\alpha_{l-1}$ other than $x$ is
$$
k\sqrt{\frac{k^2-2(i+1)k+(i+1)l}{k^2-2ik+il}}\mbox{.}
$$
\end{lem}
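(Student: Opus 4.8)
The plan is to work in suitable coordinates, reduce the statement to a purely linear-algebraic computation inside the coordinate subspace attached to $\gamma_{d-l}$, and then evaluate a ratio of two Gram determinants.

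First I would use the symmetries of $\gamma_d$ --- coordinate permutations and sign changes --- to assume that $\gamma_{d-l}$ is the face on which the first $l$ coordinates equal $1$. With this normalization, the description of $k\rho_{d,k}$ given before Theorem~\ref{HCP.sec.1.thm.1} shows that $\alpha_{l-1}=\mathrm{conv}\{ke_1,\dots,ke_l\}$, where $e_1,\dots,e_d$ is the standard basis of $\mathbb{R}^d$. The symmetric group acting on the first $l$ coordinates fixes $\gamma_{d-l}$ and permutes the vertices of $\alpha_{l-1}$, so I may further assume that $x=ke_{i+1}$ and that the $i$ chosen vertices of $\alpha_{l-1}$ are $ke_1,\dots,ke_i$. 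Now set $s=e_1+\dots+e_l$ and $v_j=ke_j-s$ for $1\le j\le l$, and take $s$ as a base point of the affine hull of $\gamma_{d-l}$. That affine hull is $s+\mathrm{span}(e_{l+1},\dots,e_d)$, so the affine space appearing in the statement is $A=s+\mathrm{span}(e_{l+1},\dots,e_d,v_1,\dots,v_i)$. Since $v_1,\dots,v_i$ and $x-s=v_{i+1}$ all lie in the coordinate subspace spanned by $e_1,\dots,e_l$, which is orthogonal to $e_{l+1},\dots,e_d$, the distance from $x$ to $A$ equals the distance from $v_{i+1}$ to $\mathrm{span}(v_1,\dots,v_i)$.

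Next I would compute the Gram matrix $G_m$ of $v_1,\dots,v_m$. A direct calculation gives $\langle v_j,v_j\rangle=k^2-2k+l$ and $\langle v_j,v_{j'}\rangle=l-2k$ for $j\neq j'$, hence $G_m=k^2I_m+(l-2k)J_m$, where $I_m$ and $J_m$ are the $m\times m$ identity and all-ones matrices. Since $J_m$ has eigenvalue $m$ with multiplicity $1$ and eigenvalue $0$ with multiplicity $m-1$, this gives $\det G_m=k^{2(m-1)}\bigl(k^2-2mk+ml\bigr)$ for $m\ge1$. Writing $k^2-2ik+il=(k-i)^2+i(l-i)$ shows this is positive since $i\le l-1<k$, so $G_i$ is non-singular and $v_1,\dots,v_i$ are linearly independent.

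Finally, by the Gram determinant recursion --- for vectors $w_1,\dots,w_m$ with Gram matrix $\Gamma_m$ one has $\det\Gamma_m=\det\Gamma_{m-1}\cdot\mathrm{dist}(w_m,\mathrm{span}(w_1,\dots,w_{m-1}))^2$, with $\det\Gamma_0=1$ --- the square of the distance from $v_{i+1}$ to $\mathrm{span}(v_1,\dots,v_i)$ equals
$$
\frac{\det G_{i+1}}{\det G_i}=\frac{k^{2i}\bigl(k^2-2(i+1)k+(i+1)l\bigr)}{k^{2(i-1)}\bigl(k^2-2ik+il\bigr)}=k^2\,\frac{k^2-2(i+1)k+(i+1)l}{k^2-2ik+il}\mbox{,}
$$
and taking square roots gives the announced value. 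I expect the only delicate point to be the bookkeeping in the first step --- choosing the base point $s$ and checking that the free directions $e_{l+1},\dots,e_d$ of $\gamma_{d-l}$ are orthogonal to all the $v_j$, so that they can be discarded; once the problem is reduced to the vectors $v_1,\dots,v_{i+1}$, the rank-one structure of $G_m$ together with the Gram recursion finishes the computation at once.
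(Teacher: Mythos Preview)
Your proof is correct. The setup --- normalizing $\gamma_{d-l}$, identifying the vertices of $\alpha_{l-1}$ as $ke_1,\dots,ke_l$, translating by the center $s$ of $\gamma_{d-l}$, and then discarding the orthogonal directions $e_{l+1},\dots,e_d$ --- is exactly what the paper does. Where you diverge is in the actual computation: the paper observes that, by the permutation symmetry of the first $i$ coordinates, the orthogonal projection of $x-s$ onto $\mathrm{span}(v_1,\dots,v_i)$ must be a scalar multiple $\lambda\sum_{j\le i}v_j$, solves a single linear equation for $\lambda$, and then evaluates the norm of the residual directly. You instead recognize the Gram matrix as the rank-one perturbation $k^2I_m+(l-2k)J_m$, read off $\det G_m$ from its eigenvalues, and invoke the Gram recursion $\det G_{i+1}/\det G_i$. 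Your route bypasses the explicit projection and packages everything into a determinant ratio; the paper's route is more hands-on but needs the symmetry observation to avoid inverting $G_i$. Both rely on the same orthogonal decomposition and arrive at the same expression.
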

\begin{proof}
Consider a set $\mathcal{V}$ of $i$ vertices of $\alpha_{l-1}$. We assume without loss of generality that $\gamma_{d-l}$ is the $(d-l)$-dimensional face of the hypercube $\gamma_d$ wherein the first $l$ coordinates are equal to $1$ and that the vertices of $\alpha_{l-1}$ contained in $\mathcal{V}$ are the ones whose positive coordinate is among the first $i$ coordinates. Let us also translate $\gamma_{d-l}$ and $\alpha_{l-1}$ by subtracting the center of $\gamma_{d-l}$, which can be done without loss of generality as well.

In this setting, the affine space spanned by $\gamma_{d-l}\cup\mathcal{V}$ contains the origin. After the translation, the first $i$ coordinates of the points contained in $\mathcal{V}$ are given by the columns of the following matrix, their last $d-l$ coordinates are equal to $0$, and their $l-i$ intermediate coordinates are equal to $-1$:
$$
\left[
\begin{array}{ccccc}
k-1 & -1 & \cdots & -1 & -1\\
-1 & k-1 & \ddots & \vdots & \vdots\\
-1 & -1 & \ddots & -1 & -1\\
\vdots & \vdots & \ddots & k-1 & -1\\
-1 & -1 & \cdots & -1 & k-1\\
\end{array}
\right]\!\mbox{.}
$$

By symmetry, the orthogonal projection of $x$ on the affine space spanned by $\gamma_{d-l}\cup\mathcal{V}$ is a multiple by a coefficient $\lambda$ of the sum of the points contained in $\mathcal{V}$. Now observe that, by symmetry, the equation
$$
\left(x-\lambda\sum_{v\in\mathcal{V}}v\right)\!\cdot{y}=0\mbox{,}
$$
where $y$ is a point from $\mathcal{V}$ does not depend on how $y$ is chosen within $\mathcal{V}$.

Solving that equation for $\lambda$ yields
$$
\lambda=\frac{-2k+l}{k^2-2ik+il}\mbox{.}
$$

Finally, we obtain
$$
\left\|x-\lambda\sum_{v\in\mathcal{V}}v\right\|=k\sqrt{\frac{k^2-2(i+1)k+(i+1)l}{k^2-2ik+il}}\mbox{,}
$$
as desired.
\end{proof}

Observe that the volume of $\gamma_{d-l}$ is $2^{d-l}$. Together with Lemma \ref{HCP.sec.2.lem.1}, the expression of the volume of a pyramid in terms of the volume of its base and the distance of its apex to it, provides the following.

\begin{lem}\label{HCP.sec.2.lem.2}
The volume of $\mathrm{conv}(\gamma_{d-l}\cup\alpha_{l-1})$ is $\displaystyle2^{d-l}k^{l-1}(k-l)\frac{(d-l)!}{d!}$.
\end{lem}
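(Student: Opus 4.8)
The plan is to compute the volume of $\mathrm{conv}(\gamma_{d-l}\cup\alpha_{l-1})$ by the iterated-pyramid construction described just before Lemma~\ref{HCP.sec.2.lem.1}: starting from the $(d-l)$-dimensional base $\gamma_{d-l}$, we successively cone over one vertex of $\alpha_{l-1}$ at a time, raising the dimension by one at each step, until all $l$ vertices of $\alpha_{l-1}$ have been used. After $i$ vertices have been added we have an $(d-l+i)$-dimensional polytope, and adjoining the $(i+1)$-st vertex $x$ multiplies the current volume by the distance from $x$ to the affine hull of $\gamma_{d-l}$ together with the $i$ previously used vertices, divided by $d-l+i+1$ (the pyramid formula in dimension $d-l+i+1$). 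That distance is exactly the quantity supplied by Lemma~\ref{HCP.sec.2.lem.1}.

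First I would record the base case: $\mathrm{vol}(\gamma_{d-l})=2^{d-l}$. Then, writing $h_i$ for the distance given by Lemma~\ref{HCP.sec.2.lem.1} when $i$ vertices are already present, the iterated-pyramid formula gives
\begin{equation*}
\mathrm{vol}\bigl(\mathrm{conv}(\gamma_{d-l}\cup\alpha_{l-1})\bigr)
=2^{d-l}\prod_{i=0}^{l-1}\frac{h_i}{d-l+i+1}
=\frac{2^{d-l}(d-l)!}{d!}\prod_{i=0}^{l-1}h_i\mbox{,}
\end{equation*}
since $\prod_{i=0}^{l-1}(d-l+i+1)=d!/(d-l)!$. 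It remains to evaluate the product of the distances. From Lemma~\ref{HCP.sec.2.lem.1},
\begin{equation*}
h_i=k\sqrt{\frac{k^2-2(i+1)k+(i+1)l}{k^2-2ik+il}}\mbox{,}
\end{equation*}
so if we set $a_i=k^2-2ik+il$ then $h_i=k\sqrt{a_{i+1}/a_i}$ and the product telescopes:
\begin{equation*}
\prod_{i=0}^{l-1}h_i=k^{l}\sqrt{\frac{a_l}{a_0}}=k^l\sqrt{\frac{k^2-2lk+l^2}{k^2}}=k^{l}\cdot\frac{|k-l|}{k}=k^{l-1}(k-l)\mbox{,}
\end{equation*}
where the final equality uses $k-l>0$, which holds because the construction is carried out only for $l<k$ (with $l$ an integer at most $\lfloor k\rfloor$). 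Substituting back yields $\mathrm{vol}\bigl(\mathrm{conv}(\gamma_{d-l}\cup\alpha_{l-1})\bigr)=2^{d-l}k^{l-1}(k-l)\,(d-l)!/d!$, as claimed.

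The only genuinely delicate point is the justification of the iterated-pyramid step, namely that at each stage the newly adjoined vertex $x$ together with the current polytope really forms a pyramid with $x$ as apex, and that $h_i$ is the height of that pyramid over the correct base. This is where the orthogonality built into the configuration is used: the affine hull of $\gamma_{d-l}$ is orthogonal to the affine hull of $\alpha_{l-1}$, so coning over the vertices of $\alpha_{l-1}$ one at a time does produce honest pyramids whose apex heights are precisely the point-to-affine-subspace distances computed in Lemma~\ref{HCP.sec.2.lem.1} (with the set $\mathcal V$ there taken to be the vertices already used). Once this is observed, the computation above is just the telescoping product, and the lemma follows.
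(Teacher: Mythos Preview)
Your proof is correct and follows exactly the approach the paper intends: the iterated-pyramid construction over $\gamma_{d-l}$ using the heights from Lemma~\ref{HCP.sec.2.lem.1}, with the telescoping product carried out explicitly. The paper states the lemma as an immediate consequence of Lemma~\ref{HCP.sec.2.lem.1} and the pyramid volume formula without writing out the telescoping, so you have simply supplied the details.
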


We obtain the volume of $\rho_{d,k}$ from Lemma \ref{HCP.sec.2.lem.2}.

\begin{thm}\label{HCP.sec.2.thm.1}
The volume of $\rho_{d,k}$ is $\displaystyle\frac{2^dk^{\lfloor{k}\rfloor-d}}{\lfloor{k}\rfloor!}$.
\end{thm}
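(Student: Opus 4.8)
The plan is to add up the volumes of the cells in the polyhedral subdivision of $k\rho_{d,k}$ described just before the statement, and then to pass from $k\rho_{d,k}$ back to $\rho_{d,k}$ by a homothety. For each $l$ with $0\leq{l}\leq\lfloor{k}\rfloor$, the full-dimensional cells of that subdivision are the polytopes $\mathrm{conv}(\gamma_{d-l}\cup\alpha_{l-1})$, one for each $(d-l)$-dimensional face $\gamma_{d-l}$ of $\gamma_d$, and a $d$-cube has exactly $2^l{d\choose{l}}$ faces of dimension $d-l$. Since these cells have pairwise disjoint interiors and cover $k\rho_{d,k}$, Lemma~\ref{HCP.sec.2.lem.2} gives
$$
\mathrm{vol}(k\rho_{d,k})=\sum_{l=0}^{\lfloor{k}\rfloor}2^l{d\choose{l}}\,2^{d-l}k^{l-1}(k-l)\frac{(d-l)!}{d!}\mbox{.}
$$
When $k$ is an integer the term $l=k$ contributes nothing because its factor $k-l$ vanishes, which matches the fact that the corresponding cells degenerate into facets of $k\rho_{d,k}$; so it is harmless to let the sum run up to $\lfloor{k}\rfloor$ in every case.

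Next I would simplify the right-hand side. Using the identities ${d\choose{l}}(d-l)!/d!=1/l!$ and $2^l2^{d-l}=2^d$, the sum collapses to
$$
\mathrm{vol}(k\rho_{d,k})=2^d\sum_{l=0}^{\lfloor{k}\rfloor}\frac{k^{l-1}(k-l)}{l!}\mbox{.}
$$
The key observation is that this last sum telescopes: for $l\geq1$ one has $k^{l-1}(k-l)/l!=k^l/l!-k^{l-1}/(l-1)!$, while the $l=0$ summand equals $1=k^0/0!$. Hence every term cancels except the last, and $\sum_{l=0}^{\lfloor{k}\rfloor}k^{l-1}(k-l)/l!=k^{\lfloor{k}\rfloor}/\lfloor{k}\rfloor!$, so that $\mathrm{vol}(k\rho_{d,k})=2^dk^{\lfloor{k}\rfloor}/\lfloor{k}\rfloor!$.

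Finally, $\rho_{d,k}$ is the image of $k\rho_{d,k}$ under the homothety of ratio $1/k$, hence $\mathrm{vol}(\rho_{d,k})=k^{-d}\mathrm{vol}(k\rho_{d,k})$, which gives the claimed value $2^dk^{\lfloor{k}\rfloor-d}/\lfloor{k}\rfloor!$.

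The computation is short, and the only step calling for real care is the first one: one must be sure that the cells of the subdivision genuinely tile $k\rho_{d,k}$ (this is exactly the subdivision claim established just before the statement, which can be cited) and that their number for each type is the number of $(d-l)$-faces of the $d$-cube. Once the sum is correctly set up, recognizing the telescoping is what yields the closed form; the rest is routine manipulation of binomial coefficients.
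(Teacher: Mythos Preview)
Your proposal is correct and follows essentially the same route as the paper: sum the cell volumes from Lemma~\ref{HCP.sec.2.lem.2} over the $2^l{d\choose l}$ faces of each dimension, simplify to $2^d\sum_{l=0}^{\lfloor{k}\rfloor}k^{l-1}(k-l)/l!$, observe (as you do via telescoping) that this sum equals $k^{\lfloor{k}\rfloor}/\lfloor{k}\rfloor!$, and divide by $k^d$. The only difference is that you spell out the telescoping explicitly, whereas the paper records the identity without justification.
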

\begin{proof}
By the above remarks on the decomposition of $\rho_{d,k}$ as a polyhedral complex, and since a hypercube has $2^l{d\choose{d-l}}$ faces of dimension $d-l$, it follows from Lemma \ref{HCP.sec.2.lem.2} that the volume of $k\rho_{d,k}$ is
$$
2^d\sum_{l=0}^{\lfloor{k}\rfloor}\frac{k^{l-1}(k-l)}{l!}\mbox{.}
$$

Dividing this quantity by $k^d$, we recover the volume of $\rho_{d,k}$. In addition,
$$
\sum_{l=0}^{\lfloor{k}\rfloor}\frac{k^{l-1}(k-l)}{l!}=\frac{k^{\lfloor{k}\rfloor}}{\lfloor{k}\rfloor!}\mbox{,}
$$
and we obtain the desired result.
\end{proof}

\begin{rem}
It is noteworthy that the volume of $\rho_{d,2}$ does not depend on $d$: by Theorem~\ref{HCP.sec.2.thm.1}, this volume is equal to $2$. Further note that $\rho_{d,2}$ is a non-zonotopal parallelotope (see for instance \cite{Dolbilin2012} and references therein), and that $\rho_{4,2}$ is the $24$-cell, a $4$-dimensional regular, self-dual polytope.
\end{rem}

Let us turn our attention to computing the volume of the boundary of $\rho_{d,k}$. In the remainder of the section, we assume that $k$ is an integer. Since all of the facets of $\rho_{d,k}$ are isometric and we know their number, we only need to compute the volume of one of these facets in order to establish the volume of the boundary of $\rho_{d,k}$. Consider the same $\gamma_{d-l}$ and $\alpha_{l-1}$ as above but, this time, assuming that $k$ and $l$ coincide. In this case, $\mathrm{conv}(\gamma_{d-l}\cup\alpha_{l-1})$ is a facet of $k\rho_{d,k}$. Observe that the polytopes $\mathrm{conv}(\gamma_{d-l}\cup{S})$ where $S$ ranges over the facets of $\alpha_{l-1}$ collectively define a polyhedral subdivision of $\mathrm{conv}(\gamma_{d-l}\cup\alpha_{l-1})$. The volume of these polytopes can be obtained from Lemma \ref{HCP.sec.2.lem.1}. As a consequence, we obtain the volume of $\mathrm{conv}(\gamma_{d-l}\cup\alpha_{l-1})$.

\begin{lem}\label{HCP.sec.2.lem.3}
If $k=l$, then the volume of $\mathrm{conv}(\gamma_{d-l}\cup\alpha_{l-1})$ is 
$$
2^{d-k}k^{k-1/2}\frac{(d-k)!}{d!}\mbox{.}
$$
\end{lem}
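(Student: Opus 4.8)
The plan is to carry out the subdivision just introduced. I would write the facet $\mathrm{conv}(\gamma_{d-l}\cup\alpha_{l-1})$ as the union of the $l$ polytopes $\mathrm{conv}(\gamma_{d-l}\cup S)$, where $S$ ranges over the facets of the regular simplex $\alpha_{l-1}$. These $l$ pieces are pairwise isometric, since the symmetries of $\alpha_{l-1}$ fixing $\gamma_{d-l}$ act transitively on the facets of $\alpha_{l-1}$, so it suffices to compute the volume of one of them and to multiply by $l=k$. To justify that the pieces genuinely tile the facet, note that when $k=l$ the barycenter $c$ of $\alpha_{l-1}$ coincides with the center of $\gamma_{d-l}$ and lies in the relative interior of $\mathrm{conv}(\gamma_{d-l}\cup\alpha_{l-1})$, whose facets are the polytopes $\mathrm{conv}(F\cup S)$ with $F$ a facet of $\gamma_{d-l}$ and $S$ a facet of $\alpha_{l-1}$; decomposing the facet into the pyramids with apex $c$ over these ridges and grouping them according to $S$ recovers the $\mathrm{conv}(\gamma_{d-l}\cup S)$, because $\gamma_{d-l}$ is the union of the pyramids with apex $c$ over its own facets.

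Next I would compute the volume of a single piece. Fix a facet $S$ of $\alpha_{l-1}$; it is a regular $(l-2)$-simplex on $l-1$ of the $l$ apices of $\alpha_{l-1}$, and $\mathrm{conv}(\gamma_{d-l}\cup S)$ is built from $\gamma_{d-l}$ by the iterated pyramid construction underlying \lemref{HCP.sec.2.lem.2}, now using only the $l-1$ vertices $v_1,\dots,v_{l-1}$ of $S$ as apices. (The facet $\mathrm{conv}(\gamma_{d-l}\cup\alpha_{l-1})$ itself is not a pyramid over $\gamma_{d-l}$, since $\alpha_{l-1}$ has $l$ vertices but only $l-1$ of them are affinely independent over the affine hull of $\gamma_{d-l}$, which is precisely why a subdivision is needed.) The height introduced at the $j$-th step is the distance from $v_j$ to the affine hull of $\gamma_{d-l}$ together with $v_1,\dots,v_{j-1}$, and \lemref{HCP.sec.2.lem.1}, specialized to $k=l$, evaluates this distance to $k\sqrt{(k-j)/(k-j+1)}$. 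Hence the volume of $\mathrm{conv}(\gamma_{d-l}\cup S)$ is the volume $2^{d-l}$ of $\gamma_{d-l}$, times the product $\prod_{j=1}^{l-1}k\sqrt{(k-j)/(k-j+1)}$ of the heights, divided by the product $\prod_{j=1}^{l-1}((d-l)+j)$ of the dimensions of the successive pyramids. The crucial simplification is that the product of the radicands telescopes to $1/k$, so the heights multiply to $k^{l-1}\cdot k^{-1/2}=k^{k-3/2}$; multiplying by the number $l=k$ of pieces restores the exponent $k^{k-1/2}$, and rearranging the remaining powers of $2$ and the factorials gives the claimed value.

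The only step I expect to require real care is the verification that the $l$ pieces form a subdivision of the facet, together with the accompanying identification of the facets of $\mathrm{conv}(\gamma_{d-l}\cup\alpha_{l-1})$; the rest is the telescoping computation above combined with \lemref{HCP.sec.2.lem.1}. As an independent check of the final formula, one may observe that when $k=l$ the facet is the orthogonal free sum $(\gamma_{d-l}-c)\oplus(\alpha_{l-1}-c)$ of the $(d-l)$-cube $\gamma_{d-l}$ and the regular $(l-1)$-simplex $\alpha_{l-1}$, whose edge length is $k\sqrt{2}$; applying the free-sum volume identity and the classical formula for the volume of a regular simplex yields the same result without passing through the subdivision.
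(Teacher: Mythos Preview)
Your approach is exactly the one the paper sketches in the paragraph preceding the lemma: subdivide $\mathrm{conv}(\gamma_{d-l}\cup\alpha_{l-1})$ into the $l=k$ pieces $\mathrm{conv}(\gamma_{d-l}\cup S)$ as $S$ runs over the facets of $\alpha_{l-1}$, and compute each piece as an iterated pyramid over $\gamma_{d-l}$ using \lemref{HCP.sec.2.lem.1}. Your telescoping of the heights and your free-sum cross-check are both sound.

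There is, however, a discrepancy you gloss over in your last sentence. The iterated pyramid climbs from dimension $d-l$ to dimension $d-1$ (the facet is $(d-1)$-dimensional, not $d$-dimensional), so the product of the successive dimensions is $(d-1)!/(d-l)!$, not $d!/(d-l)!$. Carried out as you describe, the computation therefore yields
$$
2^{d-k}k^{k-1/2}\frac{(d-k)!}{(d-1)!}\mbox{,}
$$
which differs from the displayed formula by a factor of $d$. This is the correct value: your free-sum identity gives the same thing, and one checks it directly for $(d,k)=(2,2)$ (an edge of $2\beta_2$ of length $2\sqrt{2}$) or $(d,k)=(3,2)$ (a rhombus with orthogonal diagonals of lengths $2$ and $2\sqrt{2}$, hence area $2\sqrt{2}$). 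The stated formula in the paper appears to carry a typo inherited from the analogous $d$-dimensional computation in \lemref{HCP.sec.2.lem.2}, and it propagates to \thmref{HCP.sec.2.thm.2}. So do not assert that the computation ``gives the claimed value'' without flagging this; rather, your argument proves the corrected formula.
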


Since the number of facets of $\rho_{d,k}$ is $2^k{d\choose{k}}$ and these facets are all isometric, the volume of the boundary of $\rho_{d,k}$ is obtained as an immediate consequence of Lemma \ref{HCP.sec.2.lem.3}. As above, the volume of a facet of $k\rho_{d,k}$ should be divided by $k^{d-1}$ in order to get the volume of a facet of $\rho_{d,k}$.

\begin{thm}\label{HCP.sec.2.thm.2}
If $k$ is an integer, then the volume of the boundary of $\rho_{d,k}$ is
$$
\frac{2^d}{k!}k^{k-d+1/2}\mbox{.}
$$
\end{thm}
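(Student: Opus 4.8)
The plan is to reduce the computation of the boundary volume to that of a single facet. Since $k$ is an integer, Theorem~\ref{HCP.sec.1.thm.1} describes every facet of $k\rho_{d,k}$ as an isometric copy of the polytope $\mathrm{conv}(\gamma_{d-k}\cup\alpha_{k-1})$ obtained by taking $l=k$ in the notation of Section~\ref{HCP.sec.2}, and by~(\ref{HCP.sec.1.rem.0.eq.1}) (equivalently, $k\rho_{d,k}$ has the same combinatorics, hence the same number of facets, as $\rho_{d,k}$) there are exactly $2^k{d\choose k}$ of them. Because these facets are pairwise isometric, the $(d-1)$-dimensional volume of the boundary of $k\rho_{d,k}$ is simply $2^k{d\choose k}$ times the $(d-1)$-volume of one of them, which is provided by Lemma~\ref{HCP.sec.2.lem.3}.

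First I would substitute the value from Lemma~\ref{HCP.sec.2.lem.3}, namely $2^{d-k}k^{k-1/2}(d-k)!/d!$, and simplify the product
$$
2^k{d\choose k}\cdot 2^{d-k}k^{k-1/2}\frac{(d-k)!}{d!}
$$
using ${d\choose k}=d!/(k!(d-k)!)$. The factors $2^k$ and $2^{d-k}$ combine into $2^d$, while ${d\choose k}(d-k)!/d!=1/k!$, so the boundary volume of $k\rho_{d,k}$ equals $2^dk^{k-1/2}/k!$.

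The final step is to pass from $k\rho_{d,k}$ back to $\rho_{d,k}$. Since $\rho_{d,k}=(1/k)\,(k\rho_{d,k})$, its boundary is the image of the boundary of $k\rho_{d,k}$ under the homothety of ratio $1/k$, and under that map a $(d-1)$-dimensional volume is multiplied by $k^{1-d}$ — not by $k^{-d}$, as was the case for the $d$-dimensional volume in the proof of Theorem~\ref{HCP.sec.2.thm.1}. Multiplying $2^dk^{k-1/2}/k!$ by $k^{1-d}$ yields $2^dk^{k-d+1/2}/k!$, which is the claimed value.

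I do not expect any genuine obstacle here: the substantive work has already been done in Lemma~\ref{HCP.sec.2.lem.3}, and behind it in the distance formula of Lemma~\ref{HCP.sec.2.lem.1} (applied to realize a facet as an iterated pyramid over $\gamma_{d-k}$ whose successive apices are the vertices of $\alpha_{k-1}$). The only points that require a little care are checking that every facet really is isometric to $\mathrm{conv}(\gamma_{d-k}\cup\alpha_{k-1})$ with $l=k$, so that a single facet-volume computation suffices, and using the correct dilation exponent $k^{1-d}$ for the $(d-1)$-dimensional boundary rather than the exponent $k^{-d}$ appropriate to the $d$-dimensional volume.
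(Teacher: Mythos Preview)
Your proof is correct and follows essentially the same route as the paper: count the $2^k{d\choose k}$ pairwise isometric facets, take the volume of a single facet of $k\rho_{d,k}$ from Lemma~\ref{HCP.sec.2.lem.3}, and rescale by $k^{1-d}$ to pass to $\rho_{d,k}$. The only cosmetic difference is that the paper divides the single-facet volume by $k^{d-1}$ before multiplying by the facet count, whereas you multiply first and rescale the total boundary volume afterwards; the arithmetic is identical.
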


\section{The geometry of $\rho_{d,k}^\star$}\label{HCP.sec.3}

By symmetry, the volume of $\rho_{d,k}^\star$ is $2^d$ times the volume of its intersection with the hypercube $[0,1]^d$. That intersection is precisely made up of the points within $[0,1]^d$ whose sum of coordinates is at most $k$. It turns out that an explicit formula is known for the volume of the intersection of $[0,1]^d$ with a half-space bounded by an arbitrary affine hyperplane. 

\begin{thm}[\cite{BarrowSmith1979}]\label{HCP.sec.3.thm.0}
If $a$ is a vector from $\mathbb{R}^d$ whose every coordinate is non-zero, $c$ a real number, and $H^-$ the half space of $\mathbb{R}^d$ made up of the points $x$ satisfying $a\mathord{\cdot}x\leq{c}$, then the volume of $[0,1]^d\cap{H^-}$ is
\begin{equation}\label{HCP.sec.3.thm.0.eq.1}
\frac{\displaystyle\sum(-1)^{\sigma(v)}(c-a\mathord{\cdot}v)^d}{d!\pi(a)}\mbox{,}
\end{equation}
where the sum is over the vertices $v$ of $[0,1]^d$ contained in $H^-$, $\sigma(x)$ stands for the sum of the coordinates of a point $x$ in $\mathbb{R}^d$ and $\pi(x)$ for their product.
\end{thm}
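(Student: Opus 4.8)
The plan is to establish (\ref{HCP.sec.3.thm.0.eq.1}) by an inclusion--exclusion argument that reduces the volume of $[0,1]^d\cap H^-$ to volumes of corner simplices, combined with the elementary formula for the volume of such a simplex.

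First I would reduce to the case when every coordinate of $a$ is positive. The reflection $x_i\mapsto1-x_i$ maps $[0,1]^d$ onto itself, carries the hyperplane $a\mathord{\cdot}x=c$ to the one obtained by replacing $a_i$ with $-a_i$ and $c$ with $c-a_i$, and permutes the vertices of $[0,1]^d$. A direct check shows that the right-hand side of (\ref{HCP.sec.3.thm.0.eq.1}) is unchanged by this substitution, since flipping the sign of $a_i$ flips the sign of $\pi(a)$ and, simultaneously, toggling the $i$-th coordinate of each vertex shifts $\sigma(v)$ by $1$ while permuting the set of vertices lying in $H^-$. Hence it suffices to treat the case $a_i>0$ for all $i$.

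Next comes the key identity: for almost every $x\in\mathbb{R}^d$,
$$
\mathbf{1}_{[0,1]^d}(x)=\prod_{i=1}^d\bigl(\mathbf{1}_{[0,\infty)}(x_i)-\mathbf{1}_{[1,\infty)}(x_i)\bigr)=\sum_{S\subseteq\{1,\dots,d\}}(-1)^{|S|}\,\mathbf{1}_{\{x_i\geq1\,(i\in S),\ x_i\geq0\,(i\notin S)\}}(x)\mbox{.}
$$
Multiplying by $\mathbf{1}_{H^-}$ and integrating over $\mathbb{R}^d$, the $S$-th summand becomes $(-1)^{|S|}$ times the volume of $\{x:x_i\geq1\ (i\in S),\ x_i\geq0\ (i\notin S),\ a\mathord{\cdot}x\leq c\}$; the substitution $y_i=x_i-1$ for $i\in S$ (and $y_i=x_i$ otherwise) has unit Jacobian and turns this into the corner simplex $\{y\geq0:a\mathord{\cdot}y\leq c-\sum_{i\in S}a_i\}$. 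I would then invoke the standard fact that, for $t\geq0$ and $a_i>0$, the map $y_i=tz_i/a_i$ sends the simplex $\{z\geq0:\sum z_i\leq1\}$, of volume $1/d!$, onto $\{y\geq0:a\mathord{\cdot}y\leq t\}$ with Jacobian $t^d/\pi(a)$, so that this last simplex has volume $t^d/(d!\,\pi(a))$, and it is empty for $t<0$.

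Finally I would reassemble the pieces. Writing $v_S$ for the vertex of $[0,1]^d$ with coordinates $1$ on $S$ and $0$ elsewhere, one has $\sum_{i\in S}a_i=a\mathord{\cdot}v_S$ and $(-1)^{|S|}=(-1)^{\sigma(v_S)}$, so the $S$-th term contributes $(-1)^{\sigma(v_S)}(c-a\mathord{\cdot}v_S)^d/(d!\,\pi(a))$ when $v_S\in H^-$ and $0$ otherwise (the convention $0^d=0$ absorbing the boundary case). Summing over $S$, equivalently over the vertices of $[0,1]^d$ lying in $H^-$, yields exactly (\ref{HCP.sec.3.thm.0.eq.1}). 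The only real care needed is in the reduction step — verifying that the parity change of $\sigma(v)$ and the sign change of $\pi(a)$ cancel — and in tracking which vertices index the final sum; there is no genuine obstacle here, only bookkeeping. An alternative is induction on $d$ by integrating out the last coordinate and applying the formula in dimension $d-1$, but the sign bookkeeping is essentially identical, so I would favour the inclusion--exclusion route.
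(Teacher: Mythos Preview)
The paper does not prove this theorem; it is quoted from \cite{BarrowSmith1979} and invoked as a black box to derive Proposition~\ref{HCP.sec.3.prop.1}. There is therefore no proof in the paper to compare your argument against.

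That said, your inclusion--exclusion argument is correct and is essentially the standard derivation of this formula: write $\mathbf{1}_{[0,1]^d}$ as the alternating sum of indicators of translated orthants, intersect each with $H^-$, and compute the resulting corner simplex via the linear substitution $y_i=tz_i/a_i$. The reduction to positive coordinates by the reflections $x_i\mapsto1-x_i$ is handled correctly; the sign flip of $\pi(a)$ is exactly compensated by the parity flip of $\sigma(v)$ under the induced permutation of vertices, and $c-a\mathord{\cdot}v$ is preserved. The only point to be slightly more explicit about is that, once all $a_i>0$, each orthant intersected with $H^-$ is bounded (hence has finite volume), so the finite sum of indicators may be integrated termwise without issue; you implicitly use this when ``multiplying by $\mathbf{1}_{H^-}$ and integrating over $\mathbb{R}^d$''. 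With that remark added, the proof is complete.
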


Using this formula, we derive the volume of $\rho_{d,k}^\star\cap[0,1]^d$.
\begin{prop}\label{HCP.sec.3.prop.1}
The volume of $\rho_{d,k}^\star\cap[0,1]^d$ is $\displaystyle\sum_{i=0}^{\lfloor{k}\rfloor}\frac{(-1)^i(k-i)^d}{i!(d-i)!}$.
\end{prop}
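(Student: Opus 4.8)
The plan is to combine the description of $\rho_{d,k}^\star\cap[0,1]^d$ given just before the statement with the Barrow--Smith formula (Theorem~\ref{HCP.sec.3.thm.0}). First I would record that, by (\ref{HCP.sec.0.eq.1}), $\rho_{d,k}^\star=(k\beta_d)\cap\gamma_d$, and that on the orthant containing $[0,1]^d$ one has $\|x\|_1=\sigma(x)$; hence $\rho_{d,k}^\star\cap[0,1]^d$ is exactly the set of points $x$ in $[0,1]^d$ with $\sigma(x)\leq{k}$. This is the intersection of $[0,1]^d$ with the half-space $H^-$ bounded by the hyperplane of equation $a\mathord{\cdot}x=k$, where $a=(1,1,\dots,1)$.

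Next I would apply Theorem~\ref{HCP.sec.3.thm.0} with this choice of $a$ and with $c=k$. The hypothesis is met since every coordinate of $a$ is nonzero, and here $\pi(a)=1$. The vertices of $[0,1]^d$ are the points of $\{0,1\}^d$, and such a vertex $v$ lies in $H^-$ precisely when the number of its coordinates equal to $1$, call it $i$, satisfies $i\leq{k}$, i.e.\ $i\leq\lfloor{k}\rfloor$. For a vertex $v$ with exactly $i$ coordinates equal to $1$ we have $\sigma(v)=i$ and $a\mathord{\cdot}v=i$, so the term it contributes to the numerator of (\ref{HCP.sec.3.thm.0.eq.1}) is $(-1)^i(k-i)^d$; moreover there are ${d\choose i}$ such vertices. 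Grouping the sum over vertices by the value of $i$ therefore yields
$$
\mathrm{vol}\!\left(\rho_{d,k}^\star\cap[0,1]^d\right)=\frac{1}{d!}\sum_{i=0}^{\lfloor{k}\rfloor}(-1)^i{d\choose{i}}(k-i)^d\mbox{,}
$$
and rewriting ${d\choose i}/d!$ as $1/(i!(d-i)!)$ gives the claimed formula.

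There is essentially no hard step here: the argument is a direct specialization of Barrow--Smith once the region is correctly identified, so the only points requiring any care are checking that the hypotheses of Theorem~\ref{HCP.sec.3.thm.0} apply (all coordinates of $a$ nonzero, $\pi(a)=1$) and the bookkeeping that turns the sum over vertices into a sum over the Hamming weight $i$, including the observation that $v\in H^-$ is equivalent to $i\leq\lfloor{k}\rfloor$ rather than merely $i\leq{k}$. I would also note in passing that the formula is independent of whether $k$ is an integer, consistent with the scope stated in the introduction.
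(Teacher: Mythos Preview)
Your argument is correct and follows essentially the same route as the paper: apply Theorem~\ref{HCP.sec.3.thm.0} with $a=(1,\dots,1)$ and $c=k$, then group the vertices of $[0,1]^d$ by their coordinate sum~$i$. The only addition you make beyond the paper is the explicit verification that $\pi(a)=1$ and that $v\in H^-$ amounts to $i\leq\lfloor{k}\rfloor$, which is harmless extra care.
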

\begin{proof}
The desired expression is obtained from Theorem \ref{HCP.sec.3.thm.0}, where $c$ is replaced by $k$ and $a$ by the vector whose coordinates are all equal to $1$. In this case the terms of the sum in the numerator of (\ref{HCP.sec.3.thm.0.eq.1}) only depend on the sum of the coordinates of the associated vertex $v$ of $[0,1]^d$. Rearranging these terms by first summing over the ${d\choose{i}}$ vertices of $[0,1]^d$ whose coordinates sum to $i$ and then letting $i$ range from $0$ to $\lfloor{k}\rfloor$ provides the desired result.
\end{proof}

\begin{rem}
Note that, when $k$ is an integer, $\rho_{d,k}^\star\cap[0,1]^d$ can be naturally decomposed into hypersimplices. More precisely, consider an integer $l$ such that $0\leq{l}<k$. The portion of $[0,1]^d$ made up of the points whose sum of coordinates is between $l$ and $l+1$ is an hypersimplex. It is known \cite{Laplace1886,Stanley1977} that the volume of hypersimplices is obtained by dividing Eulerian numbers \cite{Euler1755} by $d!$. Therefore, when $k$ is an integer, the volume of $\rho_{d,k}^\star\cap[0,1]^d$ can also be expressed in terms of a sum of the Eulerian numbers.
\end{rem}

The volume of $\rho_{d,k}^\star$ is obtained as a consequence of Proposition \ref{HCP.sec.3.prop.1}.

\begin{thm}\label{HCP.sec.3.thm.1}
The volume of $\rho_{d,k}^\star$ is $\displaystyle2^d\sum_{i=0}^{\lfloor{k}\rfloor}\frac{(-1)^i(k-i)^d}{i!(d-i)!}$.
\end{thm}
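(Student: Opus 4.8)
The plan is to deduce this immediately from Proposition~\ref{HCP.sec.3.prop.1} together with a symmetry argument that reduces the volume of $\rho_{d,k}^\star$ to the volume of its intersection with the positive orthant.

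First I would record the symmetry precisely. By (\ref{HCP.sec.0.eq.1}) we have $\rho_{d,k}^\star=(k\beta_d)\cap\gamma_d$, and since $\gamma_d=[-1,1]^d$ while $k\beta_d=\{x\in\mathbb{R}^d:\|x\|_1\leq k\}$, both of these sets are invariant under every coordinate sign flip $x_i\mapsto-x_i$. Hence $\rho_{d,k}^\star$ is invariant under the group $\{-1,1\}^d$ acting on $\mathbb{R}^d$ by sign flips, and this group acts simply transitively on the $2^d$ closed orthants. Consequently the $2^d$ sets $\rho_{d,k}^\star\cap O$, as $O$ ranges over the closed orthants, are the images of $\rho_{d,k}^\star\cap[0,1]^d$ under isometries (so all of equal volume), they have pairwise disjoint interiors, and their union is $\rho_{d,k}^\star$; therefore $\mathrm{vol}(\rho_{d,k}^\star)=2^d\,\mathrm{vol}(\rho_{d,k}^\star\cap[0,1]^d)$.

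Then I would substitute the value of $\mathrm{vol}(\rho_{d,k}^\star\cap[0,1]^d)$ furnished by Proposition~\ref{HCP.sec.3.prop.1}, namely $\sum_{i=0}^{\lfloor k\rfloor}\frac{(-1)^i(k-i)^d}{i!(d-i)!}$, and multiply by $2^d$ to obtain the stated formula.

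There is essentially no obstacle here, since the substantive computation was already carried out in Proposition~\ref{HCP.sec.3.prop.1} via the Barrow--Smith formula (Theorem~\ref{HCP.sec.3.thm.0}). The only point worth a line of care is checking that inside the positive orthant the constraint $\|x\|_1\leq k$ coming from $k\beta_d$ simplifies to $\sum_i x_i\leq k$, so that $\rho_{d,k}^\star\cap[0,1]^d=\{x\in[0,1]^d:\sum_i x_i\leq k\}$ is exactly the region to which Proposition~\ref{HCP.sec.3.prop.1} applies; once this is noted the theorem follows at once.
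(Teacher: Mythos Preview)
Your proposal is correct and follows exactly the paper's approach: the paper states before Proposition~\ref{HCP.sec.3.prop.1} that, by symmetry, the volume of $\rho_{d,k}^\star$ is $2^d$ times that of $\rho_{d,k}^\star\cap[0,1]^d$, and then records Theorem~\ref{HCP.sec.3.thm.1} as an immediate consequence of that proposition. Your explicit justification of the sign-flip symmetry and of the identification $\rho_{d,k}^\star\cap[0,1]^d=\{x\in[0,1]^d:\sum_i x_i\leq k\}$ simply spells out what the paper leaves implicit.
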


\begin{rem}
Since $\rho_{d,k}$ and $\rho_{d,k}^\star$ are polar to one another, their Mahler volume is the product of their volumes. Hence, by Theorems \ref{HCP.sec.2.thm.1} and \ref{HCP.sec.3.thm.1}, the Mahler volume of these polytopes is
$$
\frac{4^dk^{\lfloor{k}\rfloor-d}}{\lfloor{k}\rfloor!}\sum_{i=0}^{\lfloor{k}\rfloor}\frac{(-1)^i(k-i)^d}{i!(d-i)!}
$$
when $k$ is an integer. We have computed this quantity up to $d=100$ for all integers $k$ such that $1\leq{k}\leq{d}$ and found that, in these cases, the Mahler volume of $\rho_{d,k}$ is at least $4^d/d!$ as Mahler's conjecture states \cite{IriyehMasataka2020,Mahler1939,NazarovPetrovRyaboginZvavitch2010}.
\end{rem}

Finally, let us compute the volume of the boundary of $\rho_{d,k}^\star$. Unlike $\rho_{d,k}$, the facets of $\rho_{d,k}^\star$ are not pairwise isometric. The facets of $\rho_{d,k}^\star$ contained in a facet of $\gamma_d$ are isometric to $\rho_{d-1,k-1}$, and their volume is given by Theorem~\ref{HCP.sec.3.thm.1}. All the other facets of $\rho_{d,k}^\star$ are isometric to the intersection $\delta_{d-1,k}$ of the hypercube $[0,1]^d$ with the hyperplane made up of the points whose sum of coordinates is equal to $k$. It is noteworthy that, when $k$ is an integer, $\delta_{d-1,k}$ is an hypersimplex and, as we mention above, its volume can be computed from the Eulerian numbers. In fact, a formula for the volume of the intersection of an hypercube with an arbitrary affine hyperplane is established in \cite{FrankRiede2012} based on \cite{Ball1986}.

\begin{thm}[Theorem 2 from \cite{FrankRiede2012}]\label{HCP.sec.3.thm.1.5}
If $a$ is a vector from $\mathbb{R}^d$ whose every coordinate is non-zero, $c$ a real number, and $H$ the half space of $\mathbb{R}^d$ made up of the points $x$ satisfying $a\mathord{\cdot}x=c$, then the volume of $[-1,1]^d\cap{H}$ is
\begin{equation}\label{HCP.sec.3.thm.1.5.eq.1}
\frac{\displaystyle\|a\|_2\sum(a\mathord{\cdot}v+c)^{d-1}s(a\mathord{\cdot}v+c)\pi(v)}{2(d-1)!\pi(a)}\mbox{,}
\end{equation}
where the sum is over the vertices $v$ of $[-1,1]^d$, $s(x)$ stands for the sign of a number $x$, and $\pi(x)$ for the product of the coordinates of a point $x$ in $\mathbb{R}^d$.
\end{thm}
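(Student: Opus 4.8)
The plan is to reduce the $(d-1)$-dimensional volume of the section $[-1,1]^d\cap H$ to the derivative of a $d$-dimensional half-space volume, and to obtain the latter from the Barrow--Smith formula of \thmref{HCP.sec.3.thm.0}. Write $H^-=\{x:a\mathord{\cdot}x\leq{c}\}$ and let $W(c)$ be the volume of $[-1,1]^d\cap H^-$. Slicing this body by the hyperplanes parallel to $H$ shows that $W$ is differentiable in $c$ with
$$
\mathrm{vol}_{d-1}\!\left([-1,1]^d\cap H\right)=\|a\|_2\,W'(c)\mbox{,}
$$
the factor $\|a\|_2$ accounting for the fact that the slab between $\{a\mathord{\cdot}x=c\}$ and $\{a\mathord{\cdot}x=c+\mathrm{d}c\}$ has width $\mathrm{d}c/\|a\|_2$. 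It therefore suffices to produce a closed, manifestly differentiable form for $W(c)$ and to differentiate it once.

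First I would derive such a form for $W$. The affine substitution $x=2y-\mathbf{1}$, where $\mathbf{1}$ is the all-ones vector, carries $[0,1]^d$ onto $[-1,1]^d$, scales $d$-dimensional volume by $2^d$, and turns $a\mathord{\cdot}x\leq{c}$ into $a\mathord{\cdot}y\leq(c+\sigma(a))/2$. Feeding the latter bound into \thmref{HCP.sec.3.thm.0}, after first rewriting the Barrow--Smith sum with the truncated power $t\mapsto(\max\{t,0\})^d$ so that it runs over all $2^d$ vertices rather than only over those lying in the half-space, and then sending the vertices $y$ of $[0,1]^d$ to the vertices $v=2y-\mathbf{1}$ of $[-1,1]^d$---which turns $(-1)^{\sigma(y)}$ into $(-1)^d\pi(v)$ and $(c+\sigma(a))/2-a\mathord{\cdot}y$ into $(c-a\mathord{\cdot}v)/2$---the powers of $2$ cancel and one is left with
$$
W(c)=\frac{(-1)^d}{d!\,\pi(a)}\sum_v\pi(v)\bigl(\max\{c-a\mathord{\cdot}v,0\}\bigr)^d\mbox{,}
$$
the sum ranging over the $2^d$ vertices $v$ of $[-1,1]^d$. (The hypothesis that no coordinate of $a$ vanishes, needed for \thmref{HCP.sec.3.thm.0}, is unaffected by the substitution.)

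Next I would differentiate this finite sum term by term in $c$---legitimate since each summand is of class $\mathcal{C}^1$ when $d\geq2$, as $\tfrac{\mathrm{d}}{\mathrm{d}c}(\max\{c-t,0\})^d=d\,(\max\{c-t,0\})^{d-1}$---and then tidy the outcome up. Two cosmetic steps remain. Substituting $v$ by $-v$ and using $\pi(-v)=(-1)^d\pi(v)$ turns $c-a\mathord{\cdot}v$ into $a\mathord{\cdot}v+c$ and absorbs the sign $(-1)^d$; and the elementary identity $2(\max\{t,0\})^{d-1}=t^{d-1}\bigl(s(t)+1\bigr)$ lets one replace each $\bigl(\max\{a\mathord{\cdot}v+c,0\}\bigr)^{d-1}$ by $\tfrac12(a\mathord{\cdot}v+c)^{d-1}\bigl(s(a\mathord{\cdot}v+c)+1\bigr)$. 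The residual term $\tfrac12\sum_v\pi(v)(a\mathord{\cdot}v+c)^{d-1}$ vanishes: expanding $(a\mathord{\cdot}v+c)^{d-1}$ as a polynomial in the coordinates of $v$, every monomial carries a factor $\pi(v)\prod_iv_i^{\alpha_i}=\prod_iv_i^{\alpha_i+1}$ with $\sum_i\alpha_i\leq d-1$, so some exponent $\alpha_i+1$ is odd and $\sum_{v_i\in\{-1,1\}}v_i^{\alpha_i+1}=0$. What survives is exactly formula (\ref{HCP.sec.3.thm.1.5.eq.1}).

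I expect the only real difficulty to be bookkeeping: keeping the powers of $2$, the signs $(-1)^d$, and the factor $\|a\|_2$ straight through the affine substitution and the two symmetrisations. The substantive ingredients are just the Barrow--Smith formula, which we take as given, and the vanishing of $\sum_v\pi(v)(a\mathord{\cdot}v+c)^{d-1}$, which is immediate from parity. An alternative, more analytic route would start from Ball's Fourier-analytic formula for sections of the cube \cite{Ball1986}, but the argument above stays closer to the tools already used in this paper.
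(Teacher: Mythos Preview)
Your argument is correct; each step checks out, including the change of variables, the sign bookkeeping $(-1)^{\sigma(y)}=(-1)^d\pi(v)$, the differentiation, the symmetrisation $v\mapsto-v$, and the parity argument that kills $\sum_v\pi(v)(a\mathord{\cdot}v+c)^{d-1}$.

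There is, however, nothing to compare against: the paper does not prove this statement. It is quoted verbatim as Theorem~2 of \cite{FrankRiede2012} and used as a black box (the paper only remarks that the result there is ``based on \cite{Ball1986}''). What you have supplied is a self-contained derivation that stays entirely within the toolkit already present in the paper---it reduces the hyperplane-section formula to a single differentiation of the Barrow--Smith half-space formula (\thmref{HCP.sec.3.thm.0}), which the paper has already imported. That is a genuine improvement in economy over citing a second external source, and it makes transparent why the two formulas have the same structure. One minor caveat worth flagging explicitly: the differentiation step requires $d\geq2$ for the truncated power $t\mapsto(\max\{t,0\})^d$ to be $\mathcal{C}^1$, and at values of $c$ for which some vertex satisfies $a\mathord{\cdot}v=c$ one should appeal to continuity; both points are harmless here but would belong in a fully written-out proof.
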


We derive the volume of $\delta_{d-1,k}$ for any $k$ within $[1,d]$ using Theorem \ref{HCP.sec.3.thm.1.5}.

\begin{prop}\label{HCP.sec.3.prop.2}
The volume of $\delta_{d-1,k}$ is $\displaystyle\frac{\sqrt{d}}{(d-1)!}\sum_{i=0}^{\lfloor{k}\rfloor}(-1)^i{d\choose{i}}(k-i)^{d-1}$.
\end{prop}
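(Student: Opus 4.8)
The plan is to deduce the formula from Theorem~\ref{HCP.sec.3.thm.1.5} via an affine change of coordinates turning $[0,1]^d$ into $[-1,1]^d$, and then to collapse the resulting alternating sum with a standard finite-difference identity. Concretely, I would first apply the similarity $\psi\colon{x}\mapsto2x-\mathbf{1}$, which maps $[0,1]^d$ bijectively onto $[-1,1]^d$ and carries the hyperplane made up of the points $x$ with $\sigma(x)=k$, where $\sigma(\cdot)$ denotes the sum of coordinates, onto the hyperplane $H$ made up of the points $y$ with $\sigma(y)=2k-d$. Since $\psi$ has ratio $2$, it multiplies $(d-1)$-dimensional volumes by $2^{d-1}$, so the volume of $\delta_{d-1,k}$ equals $2^{1-d}$ times the volume of $[-1,1]^d\cap{H}$.

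Next I would invoke Theorem~\ref{HCP.sec.3.thm.1.5} with $c=2k-d$ and $a$ the vector all of whose coordinates are equal to $1$, so that $\|a\|_2=\sqrt{d}$ and $\pi(a)=1$. A vertex $v$ of $[-1,1]^d$ is determined by the set of its coordinates equal to $-1$; if that set has size $j$, then $a\mathord{\cdot}v+c=2(k-j)$ and $\pi(v)=(-1)^j$, and there are ${d\choose{j}}$ vertices of each such type. Grouping the vertices of $[-1,1]^d$ by this parameter $j$ turns the sum in (\ref{HCP.sec.3.thm.1.5.eq.1}) into
$$
\frac{\sqrt{d}\,2^{d-1}}{2(d-1)!}\sum_{j=0}^{d}(-1)^j{d\choose{j}}(k-j)^{d-1}s(k-j)\mbox{.}
$$

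It remains to simplify the last sum. Since $j\mapsto(k-j)^{d-1}$ is a polynomial of degree $d-1<d$, its $d$-th finite difference vanishes, that is, $\sum_{j=0}^{d}(-1)^j{d\choose{j}}(k-j)^{d-1}=0$. Hence the contribution of the indices $j>k$ to the sign-weighted sum is the opposite of that of the indices $j<k$, while the term $j=k$, which occurs only when $k$ is an integer, contributes nothing on either side because $(k-j)^{d-1}$ vanishes there; thus the sign-weighted sum equals twice its restriction to $0\leq{j}\leq\lfloor{k}\rfloor$. Substituting this into the expression above and simplifying the powers of $2$ (together with the factor $2^{1-d}$ from the change of coordinates) yields exactly the announced value $\frac{\sqrt{d}}{(d-1)!}\sum_{i=0}^{\lfloor{k}\rfloor}(-1)^i{d\choose{i}}(k-i)^{d-1}$. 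I expect the only delicate point to be the bookkeeping: keeping the exponent $d-1$ (rather than $d$) in the volume scaling, correctly reading off $a\mathord{\cdot}v+c$ and $\pi(v)$ for each vertex of $[-1,1]^d$, and handling the borderline case in which $k$ is an integer so that a cube vertex lies on $H$ and a formal term $0^{d-1}s(0)$ appears; no idea beyond the finite-difference identity seems to be needed.
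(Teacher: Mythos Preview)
Your argument is correct and follows essentially the same route as the paper: both apply Theorem~\ref{HCP.sec.3.thm.1.5} after the affine change of variables between $[0,1]^d$ and $[-1,1]^d$, group the vertices by the number of $-1$ coordinates (equivalently, by the coordinate sum), and then invoke the finite-difference identity $\sum_{j=0}^d(-1)^j{d\choose j}(k-j)^{d-1}=0$ to collapse the sign-split sum to twice its truncation at $\lfloor k\rfloor$. Your write-up is in fact slightly more explicit than the paper's about the scaling factor $2^{d-1}$ and the handling of the borderline term when $k$ is an integer, but no different idea is involved.
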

\begin{proof}
Recall that (\ref{HCP.sec.3.thm.1.5.eq.1}) is a sum over the vertices of $[-1,1]^d$. By a straightforward change of variables, that sum can be transformed into a sum over $\{0,1\}^d$ that provides the volume of the intersection of the hypercube $[0,1]^d$ with a hyperplane. Just as in the proof of Proposition \ref{HCP.sec.3.prop.1}, our special case is such that the terms in that sum only depend on the sum of the coordinates of the point from $\{0,1\}^d$ they correspond to. These terms can therefore be rearranged as we did in the proof of Proposition \ref{HCP.sec.3.prop.1}, by first summing over the points whose coordinates sum to $i$ and then, by letting $i$ range from $0$ to $d$.

The resulting expression for the volume of $\delta_{d,k}$ is
$$
\frac{\sqrt{d}}{2(d-1)!}\!\!\left[\sum_{i=0}^{\lfloor{k}\rfloor}(-1)^i{d\choose{i}}(k-i)^{d-1}-\sum_{i=\lfloor{k}\rfloor+1}^d(-1)^i{d\choose{i}}(k-i)^{d-1}\right]\!\!\mbox{.}
$$

However, it is well-known (see for instance \cite{Ruiz1996}) that
$$
\sum_{i=0}^d(-1)^i{d\choose{i}}(k-i)^{d-1}=0\mbox{.}
$$

As a consequence,
$$
\sum_{i=\lfloor{k}\rfloor+1}^d(-1)^i{d\choose{i}}(k-i)^{d-1}=-\sum_{i=0}^{\lfloor{k}\rfloor}(-1)^i{d\choose{i}}(k-i)^{d-1}\mbox{,}
$$
and the desired result follows.
\end{proof}

Recall that the facets of $\rho_{d,k}^\star$ are either isometric to $\rho_{d-1,k-1}$ or to $\delta_{d-1,k}$. As $\rho_{d,k}^\star$ has $2d$ facets isometric to $\rho_{d-1,k-1}$ and $2^d$ facets isometric to $\delta_{d-1,k}$, we obtain the volume of its boundary from Theorem \ref{HCP.sec.3.thm.1} and Proposition \ref{HCP.sec.3.prop.2}.

\begin{thm}\label{HCP.sec.3.thm.2}
The volume of the boundary of $\rho_{d,k}^\star$ is
$$
2^dd\!\!\left[\sum_{i=0}^{\lfloor{k}\rfloor-1}\frac{(-1)^i(k-1-i)^{d-1}}{i!(d-i-1)!}+\sqrt{d}\sum_{i=0}^{\lfloor{k}\rfloor}\frac{(-1)^i(k-i)^{d-1}}{i!(d-i)!}\right]\!\!\mbox{.}
$$
\end{thm}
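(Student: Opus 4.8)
The plan is to add up the volumes of the facets of $\rho_{d,k}^\star$, using that these facets, by the discussion preceding the statement, form exactly two isometry classes. Starting from $\rho_{d,k}^\star=(k\beta_d)\cap\gamma_d$ as in~(\ref{HCP.sec.0.eq.1}), I would first pin down which facet of $k\beta_d$ or of $\gamma_d$ each facet of $\rho_{d,k}^\star$ lies in, which is legitimate because the boundary of an intersection of two convex bodies is contained in the union of their boundaries. Intersecting with the facet hyperplane $x_i=1$ of $\gamma_d$ gives the set of $x$ with $x_i=1$, $|x_j|\leq1$ for $j\neq i$, and $\sum_{j\neq i}|x_j|\leq k-1$; dropping the pinned coordinate identifies it with $((k-1)\beta_{d-1})\cap\gamma_{d-1}=\rho_{d-1,k-1}^\star$, and there are $2d$ such facets. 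Every other facet lies in a hyperplane $\sum_i\varepsilon_ix_i=k$ with $\varepsilon\in\{-1,1\}^d$; the substitution $x_i\mapsto\varepsilon_ix_i$ turns it into the slice of $[-1,1]^d$ by $\{\sum_ix_i=k\}$, that is, $\delta_{d-1,k}$, and there are $2^d$ of them. Hence
$$
\mathrm{vol}(\partial\rho_{d,k}^\star)=2d\cdot\mathrm{vol}(\rho_{d-1,k-1}^\star)+2^d\cdot\mathrm{vol}(\delta_{d-1,k})\mbox{.}
$$

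It then remains to substitute the two known volumes. Theorem~\ref{HCP.sec.3.thm.1}, read in dimension $d-1$ with parameter $k-1$ and using $\lfloor k-1\rfloor=\lfloor k\rfloor-1$, gives $\mathrm{vol}(\rho_{d-1,k-1}^\star)=2^{d-1}\sum_{i=0}^{\lfloor k\rfloor-1}\frac{(-1)^i(k-1-i)^{d-1}}{i!(d-1-i)!}$. Proposition~\ref{HCP.sec.3.prop.2} gives $\mathrm{vol}(\delta_{d-1,k})=\frac{\sqrt d}{(d-1)!}\sum_{i=0}^{\lfloor k\rfloor}(-1)^i\binom{d}{i}(k-i)^{d-1}$, which I would rewrite using $\binom{d}{i}/(d-1)!=d/(i!(d-i)!)$ as $\mathrm{vol}(\delta_{d-1,k})=d\sqrt d\sum_{i=0}^{\lfloor k\rfloor}\frac{(-1)^i(k-i)^{d-1}}{i!(d-i)!}$. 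Plugging both into the displayed identity and pulling out the common factor $2^dd$ produces exactly the claimed formula.

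The algebra is routine; the care is needed in the geometric setup and at the endpoints of the range of $k$. One must note that a facet of $\gamma_d$ contributes a genuine facet of $\rho_{d,k}^\star$ only when $k>1$ and a hyperplane $\sum_i\varepsilon_ix_i=k$ only when $k<d$, so at $k=1$ the $2d$ ``cube facets'' collapse to vertices and at the integer $k=d$ the $2^d$ ``slice facets'' collapse to a single point. This causes no trouble, since $\mathrm{vol}(\rho_{d-1,0}^\star)=0$ by Theorem~\ref{HCP.sec.3.thm.1} and $\mathrm{vol}(\delta_{d-1,d})=0$ by the identity $\sum_{i=0}^d(-1)^i\binom{d}{i}(k-i)^{d-1}=0$ already invoked in the proof of Proposition~\ref{HCP.sec.3.prop.2}: the degenerate terms are automatically killed by the formulas, so no case split is required. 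One should also observe that Theorem~\ref{HCP.sec.3.thm.1} is applied here with a parameter $k-1$ that may lie in $(0,1)$; this is harmless because that theorem rests on Theorem~\ref{HCP.sec.3.thm.0}, which is valid for an arbitrary half-space. I expect the only real bookkeeping hazards to be the facet count $2d+2^d$, the alignment of the floor functions, and the $\binom{d}{i}$-to-factorial conversion.
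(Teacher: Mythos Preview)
Your approach is exactly the paper's: count the $2d$ cube-type facets (each isometric to $\rho_{d-1,k-1}^\star$, volume from Theorem~\ref{HCP.sec.3.thm.1}) and the $2^d$ slice-type facets (each isometric to $\delta_{d-1,k}$, volume from Proposition~\ref{HCP.sec.3.prop.2}), multiply and add, then factor out $2^dd$. One small slip to fix: after the sign change $x_i\mapsto\varepsilon_ix_i$ the slice-type facet is the section of $[0,1]^d$, not of $[-1,1]^d$, by $\{\sum_ix_i=k\}$---the $\ell_1$-constraint $\sum_i|x_i|\le k$ together with $\sum_ix_i=k$ forces every coordinate to be nonnegative---which is exactly how $\delta_{d-1,k}$ is defined in the paper, so your invocation of Proposition~\ref{HCP.sec.3.prop.2} is still correct.
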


\medskip
\noindent{\bf Acknowledgement.} The authors wish to thank Miguel Anjos for initiating this work
by bringing them together, Jun-ya Gotoh for pointing out references \cite{BertsimasPachamanovaSim2004,GotohUryasev2016,WuDingSunToh2014}, and Sebastian Pokutta
and Christoph Spiegel for pointing out references \cite{JuditskyNemirovski2020,Watson1992}. Antoine Deza is partially supported by the Natural Sciences and Engineering Research Council of Canada Discovery Grant Program (RGPIN-2020-06846). Lionel Pournin is partially supported by the ANR project SoS (Structures on Surfaces), grant number ANR-17-CE40-0033.

\bibliography{CubeCrossPolytopeInterpolation}

\providecommand{\MR}{\relax\ifhmode\unskip\space\fi MR }
\providecommand{\MRhref}[2]{%
  \href{http://www.ams.org/mathscinet-getitem?mr=#1}{#2}
}
\providecommand{\href}[2]{#2}
\begin{thebibliography}{10}

\bibitem{Ball1986}
Keith Ball, \textsl{Cube slicing in $\mathbb{R}^n$}, Proceedings of the
  American Mathematical Society \textbf{97} (1986), 465--473.

\bibitem{BarrowSmith1979}
David~L. Barrow and Philip~W. Smith, \textsl{Spline notation applied to a
  volume problem}, American Mathematical Monthly \textbf{86} (1979), 50--51.

\bibitem{BertsimasPachamanovaSim2004}
Dimitris Bertsimas, Dessislava Pachamanova and Melvyn Sim, \textsl{Robust
  linear optimization under general norms}, Operations Research Letters
  \textbf{32} (2004), 510--516.

\bibitem{BlackDeLoera2021}
Alexander Black and Jes{\'u}s~De Loera, \textsl{One-dimensional projections and
  monotone paths on cross-polytopes and other regular polytopes}, preprint
  arXiv:2102.01237 (2021).

\bibitem{Coxeter1973}
Harold Scott~MacDonald Coxeter, \textsl{Regular polytopes}, Dover Publications,
  1973.

\bibitem{DeLoeraRambauSantos2010}
Jes{\'u}s~A. {De Loera}, J{\"o}rg Rambau and Francisco Santos,
  \textsl{Triangulations: structures for algorithms and applications},
  Algorithms and Computation in Mathematics, vol.~25, Springer, 2010.

\bibitem{Dolbilin2012}
Nikolai~P. Dolbilin, \textsl{Parallelohedra: a retrospective and new results},
  Transactions of the Moscow Mathematical Society \textbf{73} (2012), 207--220.

\bibitem{Euler1755}
Leonhard Euler, \textsl{Institutiones calculi differentialis}, Academia
  {Imperialis} {Scientiarum} {Petropolitana}, 1755.

\bibitem{FrankRiede2012}
Rolfdieter Frank and Harald Riede, \textsl{Hyperplane sections of the
  $n$-dimensional cube}, American Mathematical Monthly \textbf{119} (2012),
  868--872.

\bibitem{GaudiosoGorgoneHiriart-Urruty2020}
Manlio Gaudioso, Enrico Gorgone and Jean-Baptiste Hiriart-Urruty,
  \textsl{Feature selection in {SVM} via polyhedral $k$-norm}, Optimization
  letters \textbf{14} (2020), 19--36.

\bibitem{GotohTakedaTono2018}
{Jun-ya} Gotoh, Akiko Takeda and Katsuya Tono, \textsl{{DC} formulations and
  algorithms for sparse optimization problems}, Mathematical Programming B
  \textbf{169} (2018), 141--176.

\bibitem{GotohUryasev2016}
{Jun-ya} Gotoh and Stan Uryasev, \textsl{Two pairs of families of polyhedral
  norms versus $\ell_p$-norms: proximity and applications in optimization},
  Mathematical Programming A \textbf{156} (2016), 391--431.

\bibitem{Hiriart-UrrutyLemarechal2001}
Jean-Baptiste Hiriart-Urruty and Claude Lemar{\'e}chal, \textsl{Fundamentals of
  convex analysis}, Springer, 2001.

\bibitem{IriyehMasataka2020}
Hiroshi Iriyeh and Shibata Masataka, \textsl{Symmetric {Mahler's} conjecture
  for the volume product in the $3$-dimensional case}, Duke Mathematical
  Journal \textbf{169} (2020), 1077--1134.

\bibitem{JuditskyNemirovski2020}
Anatoli Juditsky and Arkadi Nemirovski, \textsl{Statistical inference via
  convex optimization}, Princeton Series in Applied Mathematics, Princeton
  University Press, 2020.

\bibitem{Kalai1989}
Gil Kalai, \textsl{The number of faces of centrally-symmetric polytopes},
  Graphs and Combinatorics \textbf{5} (1989), 389--391.

\bibitem{Laplace1886}
Pierre-Simon Laplace, \textsl{{\OE{}}uvres compl{\`e}tes}, vol.~7,
  Gauthier-Villars, 1886.

\bibitem{LeeLeungMargot2004}
Jon Lee, Janny Leung and Fran\c{c}ois Margot, \textsl{Min-up/min-down
  polytopes}, Discrete Optimization \textbf{1} (2004), 77--85.

\bibitem{Mahler1939}
Kurt Mahler, \textsl{Ein {Minimalproblem} f{\"u}r konvexe {Polygone}},
  Mathematica (Zutphen) \textbf{B} (1939), 118--127.

\bibitem{NazarovPetrovRyaboginZvavitch2010}
Fedor Nazarov, Fedor Petrov, Dmitry Ryabogin, and Artem Zvavitch, \textsl{A
  remark on the {Mahler} conjecture: local minimality of the unit cube}, Duke
  Mathematical Journal \textbf{154} (2010), 419--430.

\bibitem{PavlikovUryasev2014}
Konstantin Pavlikov and Stan Uryasev, \textsl{{CVaR} norm and applications in
  optimization}, Optimization Letters \textbf{8} (2014), 1999--2020.

\bibitem{PokuttaSpiegelZimmer2020}
Sebastian Pokutta, Christoph Spiegel and Max Zimmer, \textsl{Deep neural
  network training with {Frank}--{Wolfe}}, preprint arXiv:2010.07243 (2020).

\bibitem{Ruiz1996}
Sebasti{\'a}n~Mart{\'i}n Ruiz, \textsl{An algebraic identity leading to
  {Wilson's} theorem}, The Mathematical Gazette \textbf{80} (1996), 579--582.

\bibitem{Stanley1977}
Richard Stanley, \textsl{Eulerian partitions of a unit hypercube}, Higher
  Combinatorics (Martin Aigner, ed.), Reidel, 1977, p.~49.

\bibitem{Watson1992}
G.~Alistair Watson, \textsl{Linear best approximation using a class of
  polyhedral norms}, Numerical Algorithms \textbf{2} (1992), 321--335.

\bibitem{WuDingSunToh2014}
Bin Wu, Chao Ding, Defeng Sun, and Kim-Chuan Toh, \textsl{On the
  {Moreau}--{Yosida} regularization of the vector $k$-norm related functions},
  SIAM Journal on Optimization \textbf{24} (2014), 766--794.

\end{thebibliography}
\bibliographystyle{ijmart}

\end{document}